\numberwithin{equation}{section}
\numberwithin{figure}{section}
\def\rightharpoonupfill@{\arrowfill@\relbar\relbar\rightharpoonup}
\newcommand{\norm}[1]{\left\Vert#1\right\Vert}
\newcommand{\Real}{\mathbb{R}}
\newcommand{\eps}{\epsilon}
\newcommand{\Ord}{\mathcal{O}}
\newcommand{\p}{\partial}
\newcommand{\Om}{\varOmega}
\newcommand{\La}{\lambda}
\newcommand{\U}{\mathcal{U}} 
\newcommand{\C}{\mathcal{C}}
\newcommand{\W}{\mathcal{W}}
\newcommand{\D}{\mathcal{D}}
\newcommand{\tg}{\!\!}
\newcommand{\Na}{\nabla}
\newcommand{\twoscale}{\xrightharpoonup{\rm 2-drift}}
\newcommand{\twoscales}{\xrightharpoonup{\rm 2s-drift}}
\newtheorem{remark}{Remark}[section]
\newtheorem{lemma}{Lemma}[section]
\newtheorem{theorem}{Theorem}[section]
\newtheorem{definition}[theorem]{Definition}
\DeclareMathAlphabet{\mathscrbf}{OMS}{mdugm}{b}{n}
\providecommand{\keywords}[1]{\textbf{\textit{Key words.}} #1}
\providecommand{\MSC}[1]{\textbf{\textit{MSC}} #1}
\title{Fast drift effects in the averaging of a  filtration combustion system -- a periodic homogenization approach }
\author{
Ekeoma Rowland Ijioma\thanks{MACSI, Department of Mathematics and Statistics, University of Limerick, Ireland.~(Email: e.r.ijioma@gmail.com)} 
~and~ Adrian Muntean\thanks{Department of Mathematics and Computer Science, Karlstad University, Sweden.~(Email: adrian.muntean@kau.se)} }
\begin{document}

\maketitle

\begin{abstract}
We target at the periodic homogenization of a semi-linear reaction-diffusion-convection system describing filtration combustion, where fast drifts are triggered by the competition between heat and mass transfer processes in an asymptotic regime of dominant convection. In addition, we consider the interplay between surface nonlinear chemical reactions and transport processes. To handle the oscillations occurring due to the heterogeneity of the medium,  we rely on the concept of two-scale convergence with drift to obtain, for suitably scaled model parameters, the upscaled system of equations together with effective transport parameters. The main difficulty is to treat the case of a coupled multi-physics problem. We proceed by extending the results reported by G. Allaire et al. and other related papers in this context to the case of coupled system of evolution equations pertinent to filtration combustion.
\end{abstract}

\keywords{
 Filtration combustion, thermal dispersion, periodic homogenization, 
 two-scale convergence with drift
 }
 
\smallskip
 \MSC{80A25, 35B27, 76M50, 80A32}




\section{Introduction}
Combustion in porous medium when air (or any other gaseous oxidizer) is injected has multiple applications in nowadays technology such as in self-propagating high-temperature synthesis \cite{Moore90}, smoldering combustion in normal and microgravity environments \cite{wang08, Olson98, Aldushin06}, etc. In this paper, we study the case of a dry porous medium, fully saturated by a gaseous mixture, and possibly allowing for both fast drift and fast combustion (fast gas-solid chemical reaction). An industrial application of a similar combustion regime can be found with combustions in fixed bed reactors in which a catalytic bed, preheated to a relatively high temperature, is exposed to a cold reacting mixture \cite{THULLIE95, Andrez99}. These studies exemplify a competition between mass and heat transport in porous media in which an exothermic reaction takes place.  The combined phenomena can be measured in the range of dominant Pecl\'et and Damk\"ohler numbers, which gives rise to dispersion in the proposed system. It is worth mentioning that the fast drift arises as a response of the system to a strong convection regime at the microscopic level whereas the fast combustion limit is a phenomenon common with non premixed flaming combustion; see, e.g., \cite{Faeth86}. 

Furthermore, the importance of dispersion phenomena in porous media combustion and other engineering applications is well-known in the literature; see, e.g., \cite{Taylor53,Salles93,Fatehi94,Choquet14}. In \cite{Pedras08}, thermal dispersion coefficients are calculated for an infinite porous medium by means of the volume averaging method; see also in this context \cite{HSU90,Sano11}. Also, the homogenization method has been applied in the derivation of dispersion coefficients; see, e.g., \cite{Moyne02,Auriault95}. However, in the framework of filtration combustion of solids, there is no detailed account of the dispersion effect on the transport parameters, which arises from strong competition between mass and thermal transport in the presence of chemical reaction. We stress that in such high temperature regimes, the use of measurement devices becomes impracticable, hence stimulations on the right effective transport coefficients need some theoretical insight.

Thus, we target the upscaling of combustion scenarios. We pay particular attention to capturing the effect of dominant P\'eclet and Damk\"ohler numbers, at the microscopic level, on the governing macroscopic (upscaled) combustion equations. Specifically, we focus on the structure of the effective thermal and mass dispersion tensors. The main mathematical difficulties arising in our context are fourfold:
\begin{itemize}
\item[$(i)$] the nonlinearity of the gas-solid chemical reaction kinetics of Arrhenius type;
\item[$(ii)$] the treatment of a coupled system of partial differential equations posed in high-contrasting microstructures;
\item[$(iii)$] the fast drift;
\item[$(iv)$] the coupled multi-physics and evolution system of equations.
\end{itemize}  
In our analysis, we will explore the nonlinearity of the problem (due to $(i)$) by mainly relying on its structural properties and by adopting the techniques by \cite{Harsha} to work in our setting. One possibility of avoiding the use of the nonlinearity in the Arrhenius law is to consider its linear approximation as suggested, for example, in \cite{Kagan}; nevertheless, the working hypotheses for such linear approximation are not so obvious. We borrow from our previous experience in handling situations like  $(ii)$  (see for instance \cite{Fatima12,Muntean12} for the treatment of a related scenario from chemical attack on concrete structures) and focus on the fast drift $(iii)$, which, at first sight, is an impediment to the classical theory of homogenization and on the new aspect $(iv)$. We adapt the working technique \cite{Harsha,Maro} to our combustion setting, so that we can deal with the aspect $(iii)$ in combination with $(iv)$; see also \cite{Mik,HH}. The method of two-scale convergence with drift as described in the latter reports is more appropriate for purely periodic velocity fields, with no dependence on the macroscopic variable. Therefore, a possible area of interest in the direction of problems with strong convection is the treatment of a more general flow field; specifically, in a locally-periodic setting following the recent work cited in \cite{Thomas17} and in the handling of the homogenization problem for multicomponent mass transport \cite{Allaire15}.

As possible extensions to our approach, we foresee the handling of locally periodic coverings of $\mathbb{R}^d$ ($d=2$ or $3$); considerations on the bounded domain case being however out of reach for the moment. Due to finite size effects, localizations of both heat and concentration seem unavoidable, at least for naive scalings, see e.g. \cite{JFA} for the evidence of localization in a scalar case.

In a forthcoming publication, we will also look into the case when liquid islands (which can eventually be perceived as randomly distributed defects) are initially present in the porous medium, which typically occurs in coal gasification or in-situ combustion in oil recovery, see e.g. \cite{HansBruining}.

The paper is organized as follows: we describe the geometry of the porous medium in Section \ref{filtration-model}, while Section \ref{Scaling} contains our scaling arguments. The model equations are listed in Section \ref{micro-pb}. The main result of this paper is the set of upscaled equations and effective coefficients (for the heat capacity, transport and chemical reaction) summarized in Theorem \ref{MR} and reported with details in Section \ref{main-result}. The proof of Theorem \ref{MR} is shown in Section \ref{proofMR}. 
\section{Problem description}\label{filtration-model}
Basic modeling considerations in combustion can be looked up, for instance, in \cite{buck85,Mimura09} and references cited therein. The pore scale description of the filtration model studied in this framework, has been previously introduced in \cite{Ijioma13,Ijioma15b}. We recall here only the main ingredients of the model. In filtration combustion processes, there are predominantly two competing transport processes, the transport of heat and the transport of mass of a gaseous mixture, which are governed by convection-diffusion equations coupled with appropriate chemical reactions to exhibit the correct phenomena at the macroscopic scale. The competition allows for a thermal-diffusive transport through the porous medium, say $\Om$, which we consider to be infinite and representative of the physical material of interest $\Om$ has essentially two distinct parts--a skeletal structure made of an array of periodically placed reactive solid obstacles that is complemented with pores. In the context of this study, we assume this medium is saturated with a gaseous mixture. The gas velocity, denoted by $b$, is taken to be purely periodic and given. We denote the region of the porous medium occupied by the gaseous mixture by $\Om_{\rm g}$ while the remainder, representing the region of the solid obstacles, is denoted by $\Om_{\rm s}$; a gas-solid interface separating the two regions is denoted by $\p\Om_{\rm s}$. Convective transport is modeled through the velocity of the gas in which an oxidizer (component of the gaseous mixture that participates in a chemical reaction) of mass concentration $\C(t,x)$, and heat with temperatures, $T_{\rm g}(t,x)$ and $T_{\rm s}(t,x)$ respectively, distributed in the gas and solid regions, are conveyed. Here, $x\in \Om$ denotes the macroscopic space variable and $t$, the time variable. The diffusive and conductive transports in the two regions are determined by the molecular diffusion $D$, for the mass transport of the oxidizer and by the thermal conductivities, $\lambda_{\rm g}$ and $\lambda_{\rm s}$, for the transports of heat in the gas and solid regions, respectively. The interplay between the oxidizer and the reactive solid obstacles in the presence of heat initiates a chemical reaction. The combustion mechanism is governed by an Arrhenius type kinetics at the gas-solid interfaces. To keep the presentation simple, we assume that heat radiation and adsorption of the gaseous mixture on the solid surface are negligible. 
Let the temperature in the porous medium $\Om$, be denoted by $T(t,x).$ We decompose $T(t,x)$ in the two regions as follows:
\begin{align}\label{extT}
T(t,x)= 
\begin{dcases}
T_{\rm s}(t,x),& x \in \Om_{\rm s},\\
T_{\rm g}(t,x),& x\in \Om_{\rm g}.
\end{dcases}
\end{align} 
Depending on the situation, it is sometimes convenient to work with the temperature $T(t,x)$, while at some other times the use of {\em two temperatures}\footnote{The decomposition of the temperature should not be confused with that one typically arising in the theory of heat conduction involving two temperatures (i.e. the {\em conductive} and the {\em thermodynamic} temperatures). In our scenario, the conductive and thermodynamic temperatures coincide; compare \cite{Chen}. } is more convenient. 
\subsection{Mathematical model}
Denote by $t_f\in (0,\infty)$ the final observation time of the combustion process. The balance of heat and mass transport in the porous medium is described by two convection-diffusion-like equations posed in $\Om_{\rm g}$ and a heat conduction equation in $\Om_{\rm s}$, viz.
\begin{align}\label{model1}
\begin{dcases}
c_{\rm g}\dfrac{\p T_{\rm g}}{\p t} +c_{\rm g}  b\!\cdot\!\Na T_{\rm g} - \Na\!\cdot\!(\La_{\rm g}\Na T_{\rm g})=0,&\mbox{in $(0,t_f)\times \Om_{\rm g},$}\\
c_{\rm s}\dfrac{\p T_{\rm s}}{\p t} - \Na\!\cdot\!(\La_{\rm s}\Na T_{\rm s})=0,&\mbox{in $(0,t_f)\times \Om_{\rm s},$}\\
\dfrac{\p \C}{\p t} +  b\!\cdot\!\Na \C - \Na\!\cdot\!(D\Na \C)=0,&\mbox{in $(0,t_f)\times \Om_{\rm g}$}.
\end{dcases}
\end{align}
The system \eqref{model1} is coupled at the gas-solid interface via the following flux balances:
\begin{eqnarray}\label{model2}
\La_{\rm g}\Na T\!\cdot  n = \La_{\rm s}\Na T\!\cdot  n + QW(T,\C), &\mbox{ on $(0,t_f)\times \p\Om_{\rm s}$},\\
D\Na \C\!\cdot  n =-W(T,\C), &\mbox{ on $(0,t_f)\times \p\Om_{\rm s}$},
\end{eqnarray}
where $Q>0$ represents the heat release and $W:\Real\times\Real \rightarrow \Real$, the surface production term, is defined as 
\begin{align}\label{ratelaw}
W(\alpha,\beta) = A\beta f(\alpha),~\mbox{with $f(\alpha):=\exp\Big(-\dfrac{T_{\textrm a}}{\alpha}\Big).$}
\end{align}
\eqref{ratelaw} is a first-order Arrhenius kinetics with a pre-exponential factor $A>0$ and an activation temperature $T_{\textrm a}>0$.~The form of \eqref{ratelaw} agrees with a constant solid fuel assumption. We assume continuity of temperature across the interface, ie.,
\begin{eqnarray}\label{model3}
T_{\rm g} = T_{\rm s}\quad\mbox{on $(0,t_f)\times \p\Om_{\rm s}$}.
\end{eqnarray}
\subsection{Description of the porous medium}
In this section, we describe the structure of the heterogeneous porous medium of interest. 
\begin{figure}[!h]
\centering
\includegraphics[scale=0.3]{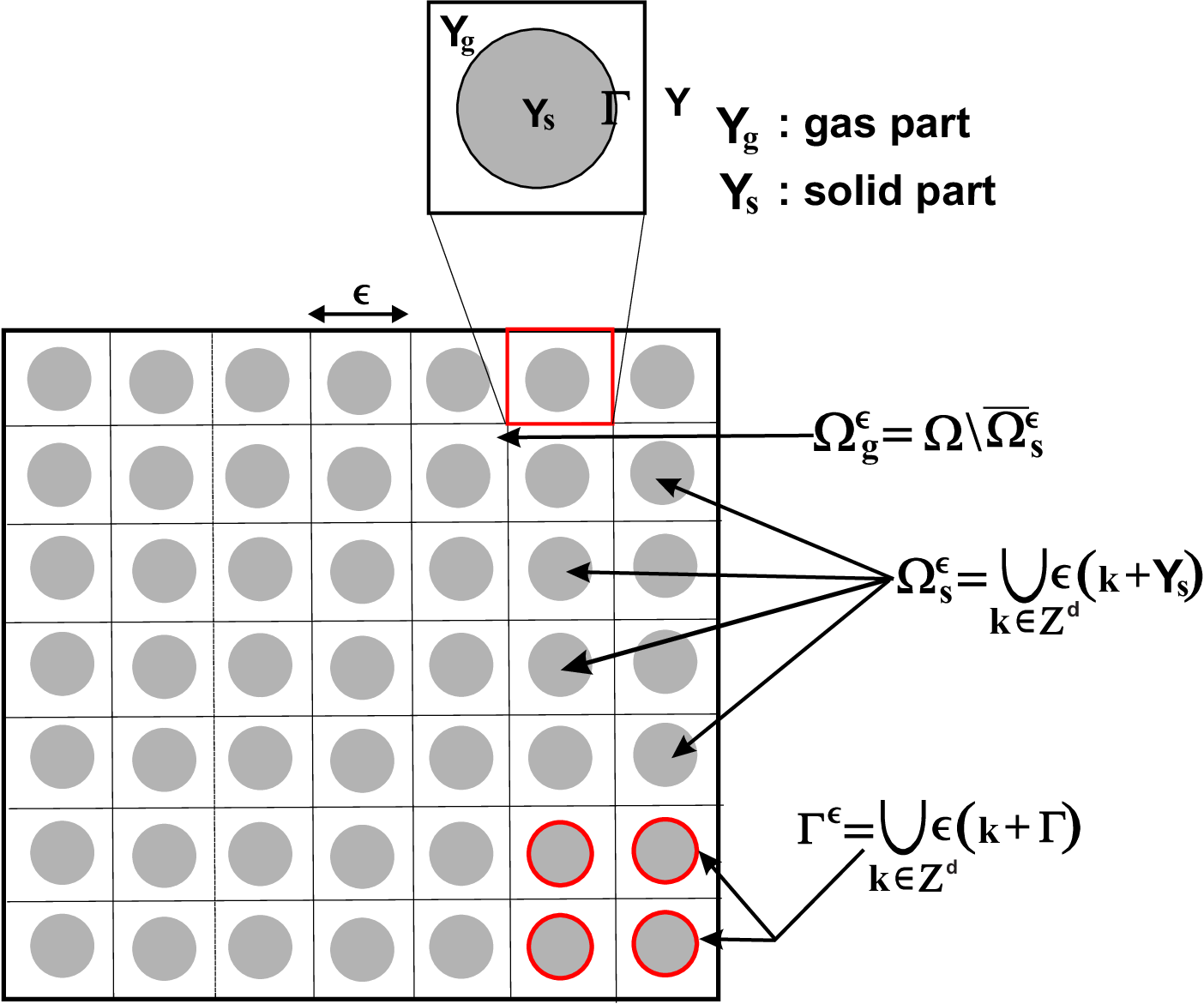}
\caption{Two-dimensional periodic domain setting the stage for the homogenization and the reference cell. The solid obstacles are represented by the gray disks with the boundaries in red. The remaining part of the medium consists of the gas domain.}
\label{Figure2}
\end{figure}
We assume $\Om^\eps$ to be an $\eps$-periodic unbounded open set of $\Real^d$, where $d\in \{2, 3\},$ is the space dimension. $\eps$ is a scale parameter defined as the ratio between the period in the arrangement of the reactive obstacles and a characteristic macroscopic length scale of interest. The porous medium is subdivided into a periodic distribution of cells $Y^\eps$, with each cell being equal and defined up to a scaled translation of a reference unit cell $Y=[0,1]^d$. The unit cell consists of two distinct parts: a gas-filled part and  a solid part. We denote by $Y_{\rm s}$ the solid part of $Y$. Denote by $\Gamma$ its smooth boundary. The gas-filled part is a smooth connected open set defined by $Y_{\rm g}=Y\setminus \overline{Y}_{\rm s}.$ We represent the collection of the periodically distributed solid parts by $Y^\eps_{\textrm s,k}=\eps \big(k+Y_{\rm s}\big),$ and thus the periodically distributed boundaries becomes $\Gamma^\eps_{k}=\eps \big(k+\Gamma\big)$ for each multi-index $k\in \mathbb{Z}^d$. By these notations, the ensemble of the periodically translated solid parts, the matrix of connected gas-filled parts and the ensemble of the solid boundaries can be written as
\begin{eqnarray}
\Om^\eps_{\rm s}=\bigcup_{k\in\mathbb{Z}^d}Y^\eps_{\textrm s,k},~~\Om^{\eps}_{\rm g}=\Om^\eps\setminus\overline{\Om^{\eps}_{\rm s}},~~\Gamma^{\eps}=\bigcup_{k\in\mathbb{Z}^d}\Gamma^{\eps}_{k},
\end{eqnarray}
Compatible with the $\eps$-periodic representation of the porous medium, we assume that all involved physical quantities and functions are rapidly oscillating. That is, for a given function $\psi$, we write $\psi^{\eps}(x) = \psi(x/\eps)$ for all macroscopic variable $x\in\Om^\eps$ with $y=x/\eps$ representing the microscopic variable. We also assume that the function $\psi(y)$ and all physical quantities can be extended by $Y$-periodicity to the whole of $\mathbb{R}^d$ (with a period $\eps$).~Thus, the triplet ($T, \C, b$) can be written as ($T^{\eps}, \C^{\eps}, b^{\eps}$). The superscript $\eps$ points out this way the dependence of the solution vector on the  $\eps$-changes in the medium.
We refer to \cite{Cioranescu99,Cioranescu07a} for a rigorous mathematical description of the geometry of the (periodic) porous media. Without loss of generality, we will sometimes use throughout the paper 
\begin{align*}
\int\limits_{Y}\psi(y)dy ~~\mbox{instead of}~~ \int\limits_{Y_{\rm g}}\psi_{\rm g}(y)dy + \int\limits_{Y_{\rm s}}\psi_{\rm s}(y)dy, 
\end{align*}
Similar representations also hold for counterparts defined in $\Om^\eps$. We also use the convection, $\big[\beta\big]_{\Gamma} =$ \mbox{$\beta_{\rm g} - \beta_{\rm s}$ on $\Gamma$}, to represent the jump across the boundary $\Gamma$ function that takes values in both the gas-filled region and the solid region of the domain. Furthermore, $\psi(y)\Big|_{\Gamma}$ is the restriction of the function $\psi$ on $\Gamma$. The volume measures on $\Om^\eps$ are denoted by $dx$ and $d\sigma$ are the surface measures on $\Gamma^\eps$.
\subsection{Scaling of the mathematical model}\label{Scaling}
 Prior to performing the homogenization procedure, we normalize the system of governing equations \eqref{model1}--\eqref{model3} as discussed in \cite{Ijioma13,Ijioma15b}.  The procedure leads to a couple of important dimensionless parameters.
To obtain them, we introduce dimensionless variables as follows:
\begin{eqnarray}\label{scaled}
T^{\eps}=T^{\eps\ast}T_{\textrm c},~\C^{\eps}=\C^{\eps\ast}\C_{\textrm c},~b^{\eps}=b^{\eps\ast}b_c,~x=x^{\ast}L,~t=t_{\textrm c}t^{\ast},
\end{eqnarray}
where the subscript $\textrm c$ denotes some constant characteristic quantity while the asterisk $(\ast)$ denotes the corresponding dimensionless variable.~Similar characteristic quantities are introduced for all physical quantities entering the equations, i.e., for any generic physical quantity $\psi$, the normalization scheme is given as $\psi=\psi^{*}\psi_{\textrm c}$, where $\psi_{\textrm c}$ is some characteristic value of interest.~The dimensionless parameters are derived from these characteristic quantities and are estimated in terms of orders of magnitude in $\eps$;~see \cite[]{Auriault91}.~Various choices of the scalings usually lead to different forms of the limit problem after the asymptotic procedure as $\eps\rightarrow 0$.~Our equations in their dimensionless forms can be written as:
\begin{align}\label{dimless}
\begin{dcases}
\mathcal{P}_{T}c^{\ast}_{\rm g}\dfrac{\p T^{\eps\ast}_{\rm g}}{\p t^{\ast}} + c^{\ast}_{\rm g}\mathcal{P}e b^{\ast}\!\cdot\!\Na T^{\eps\ast}_{\rm g}- \Na\!\cdot\!(\La^{\ast}_{\rm g}\Na T^{\eps\ast}_{\rm g})=0,\\
\mathcal{P}_{T}mc^{\ast}_{\rm s}\dfrac{\p T^{\eps\ast}_{\rm s}}{\p t^{\ast}} - \mathcal{K}\Na\!\cdot\!(\La^{\ast}_{\rm s}\Na T^{\eps\ast}_{\rm s})=0,\\
\dfrac{\p \C^{\eps\ast}}{\p t^{\ast}} + \mathcal{P}e b^{\ast}\!\cdot\!\Na \C^{\eps\ast}- \mathcal{L}e^{-1}_{\rm g}\Na\!\cdot\!(D^{\ast}\Na \C^{\eps\ast})=0,\\
\La_{\rm g}^{\ast}\Na T^{\eps\ast}\!\cdot  n = \mathcal{K}\La_{\rm s}^{\ast}\Na T^{\eps\ast}\!\cdot  n + \mathcal{D}aQ^{\ast}W^{\eps\ast},\\
T^{\eps\ast}_{\rm g}=T^{\eps\ast}_{\rm s},\\
D^{\ast}\Na \C^{\eps\ast}\!\cdot  n = -\mathcal{L}e\mathcal{D}aW^{\eps\ast},
\end{dcases}
\end{align}
where 
\begin{align}
W^{\eps\ast}=A^{\ast}\C^{\eps\ast}\exp\Big(-\dfrac{T^{\ast}_{\textrm a}}{T^{\eps\ast}}\Big).
\end{align}
This formulation introduces the following global characteristic time scales:
\begin{align*}
t_{D} := \dfrac{L^2}{D_{\textrm c}}, t_{A} := \dfrac{L}{b_{\textrm c}}, t_{\La} :=\dfrac{c_{\textrm gc}L^2}{\La_{\textrm gc}}, \mbox{ and } t_{R} :=\dfrac{L}{A_{\textrm c}},
\end{align*}
where $t_D$ is the \emph{characteristic diffusion time scale}, $t_{\textrm A}$ is the \emph{characteristic advection time scale}, $t_{\La}$ is the \emph{characteristic time of conductive transfer}, while $t_R$ is the \emph{characteristic chemical reaction time scale}.
We introduce additionally the following characteristic dimensionless numbers:
\begin{align*}
&\mathcal{P}e:=\dfrac{b_{\textrm c}L}{\alpha}=\dfrac{t_{\La}}{t_{\textrm A}} \ (\mbox{\emph{P\'eclet number}}),\\
&\mathcal{L}e :=\dfrac{\alpha}{D_{\textrm c}}=\dfrac{t_{D}}{t_{\La}} \ (\mbox{\emph{Lewis number}}),\\
&\mathcal{D}a := \dfrac{A_{\textrm c}L}{\alpha}=\dfrac{t_{\La}}{t_R} \ (\mbox{\emph{Damk\"{o}hler number}}),
\end{align*}
where $\alpha := \La_{\textrm gc}/c_{\textrm gc}$ is the thermal  diffusivity.~Other dimensionless quantities introduced in Eq.~\eqref{dimless} include $\mathcal{P}_{T}=t_{\La}/t_{\textrm c}$, the ratio of characteristic time of conductive transfer to the characteristic time scale of the observation, $m =c_{\textrm sc}/c_{\textrm gc}$ is the ratio of heat capacities, $\mathcal{K}=\La_{\textrm sc}/\La_{\textrm gc}$ is the ratio of heat conductivities.

Since the fast reaction limit of non premixed combustion is reached when the characteristic transport times of mass diffusion, thermal diffusion and convection are large in comparison to characteristic times of reactions \cite{Faeth86}, we take the time of conductive heat transfer in the subdomain, $\Omega^{\eps}_{\rm g}$, as the characteristic time of the observation at the macroscopic scale, i.e., $t_{\textrm c}=t_{\La}.$ The peculiarity in the studied problem is that we have assumed the time scale for convective transport to be small in comparison to diffusion. This assumption has no physical bearing on the problem, but it would allow for investigation of a Taylor's mediated dispersion regime. Moreover, our interest in examining the long time behaviour of the combustion process is not affected by the latter assumption.

The characteristic temperature of the combustion product is given by $T_{\textrm c}=Q_{\textrm c}C_{\textrm c}/c_{\textrm gc}$ so that $T^{\ast}_{\textrm a}=T_{\textrm a}/T_{\textrm c}$ is the dimensionless activation temperature.
To simplify the setting  when passing to the homogenization limit, we assume that the constituents have heat capacities of the same order of magnitude, i.e., $m=\Ord(1).$ Since $t_{\textrm c}=t_{\La}$, it follows that $\mathcal{P}_{T}=\Ord(1)$.~The Lewis number is considered in a regime in which the time of diffusion is comparable to the time of conductive heat transfer, i.e., $\mathcal{L}e=\Ord(1)$. We study a scenario in which the constituent conductivities are of the same order of magnitude, i.e., $\mathcal{K}=\Ord(1)$. 

\subsection{The microscopic problem}\label{micro-pb}
The combustion regime of interest is essentially flaming, which is characterized by high Damk\"{o}hler numbers. Thus, we choose the estimate $\mathcal{D}a = \Ord(\eps^{-1})$ in the our model. In order to study effects similar to a Taylor's mediated dispersion regime, the convective transport is estimated as $\mathcal{P}e = \Ord(\eps^{-1})$. Note that, in principle, the asymptotic homogenization procedure can  be performed at least formally for other combinations of scalings in terms of $\eps$.  However, when the P\'eclet and Damk\"{o}hler numbers are not balanced, then the validity of our starting model (\ref{dimless}) is restricted. For instance, high P\'eclet numbers in combination with moderate Damk\"{o}hler numbers leads to turbulence regimes (compare \cite{Sivashinsky83}), while high Damk\"{o}hler numbers with moderate P\'eclet numbers most likely facilitate the occurrence of flames. In both such cases, the microscopic model has to be changed essentially. In one case, it becomes a system of stochastic partial differential equations, while in the other case the model stays at the deterministic level of description, but must include information about the {\em a priori} unknown position, shape and velocity of the flames. Taking into account the parameter estimates above and considering all the other parameters of the order of $\Ord(1)$ with respect to $\eps$, we rewrite \eqref{dimless} after dropping the ($\ast$) and we obtain the following microscopic problem: 
\begin{align}\label{eqmicro}
\begin{dcases}
c_{\rm g}\frac{\p T^{\eps}_{\rm g}}{\p t} + c_{\rm g}\dfrac{ b^\eps}{\eps}\!\cdot\!\Na T^{\eps}_{\rm g}- \Na\!\cdot\!(\La_{\rm g}\Na T^{\eps}_{\rm g})=0,&\mbox{in $S\times \Om^{\eps}_{\rm g},$}\\
c_{\rm s}\dfrac{\p T^{\eps}_{\rm s}}{\p t} -\Na\!\cdot\!(\La_{\rm s}\Na T^{\eps}_{\rm s})=0, &\mbox{in $S\times \Om^{\eps}_{\rm s},$}\\
\frac{\p \C^{\eps}}{\p t} + \dfrac{ b^\eps}{\eps}\!\cdot\!\Na \C^{\eps}- \Na\!\cdot\!(D\Na \C^{\eps})=0,&\mbox{in $S\times \Om^{\eps}_{\rm g},$}\\
\begin{rcases*}
\La_{\rm g}\Na T^{\eps}\!\cdot  n = \La_{\rm s}\Na T^{\eps}\!\cdot  n + Q\dfrac{W^{\eps}}{\eps},\\
T^{\eps}_{\rm g}=T^{\eps}_{\rm s},\\
D\Na \C^{\eps}\!\cdot  n = -\dfrac{W^{\eps}}{\eps},
\end{rcases*} &\mbox{in $S\times \Gamma^{\eps}.$}\\
\C^\eps(x,0)=\C_{0}(x) \mbox{ for } x\in \Omega_{\rm g}^\eps,\\
T^\eps(0,x)=T_{0}(x) \mbox{ for }  x\in \Omega^\eps, 
\end{dcases}
\end{align}
where $S=(0,t_f)$ is the time interval of observation of the combustion process with $t_f$, some final time. We refer to this system of equations as $\mathcal{P}^\eps$.
\begin{remark}
As expected, performing in the dimensional model (\ref{dimless}) the change of variables $x=\eps y$ and $t=\eps^2 \tau$ leads to the same parabolic scaling in $\eps$ as shown in (\ref{eqmicro}). 
\end{remark}

\begin{remark}
$\mathcal{P}^\eps$ is characterized by the presence of large P\'eclet numbers, which may trigger large drifts in the concentration and temperature profiles. In particular, fast drift appears as a form of response at the observation scale due to the dominant convection at the microscopic level. As will be demonstrated later in the text, the strong convection regime at the microscopic level is characterized by the structure of the ensuing cell problems of the homogenization procedure. 
\end{remark}

\section{Assumptions.  Concept of solution. Technical preliminaries}
We take into account the following assumptions on the involved functions, physical quantities, and parameters:
\begin{itemize}
\item[(H1)] The velocity of the gaseous mixture is periodic, incompressible and it does not penetrate the solid, i.e., $b(y) \in L^{\infty}_{\#}(Y_{\rm g}; \mathbb{R}^d)$ is such that
\begin{eqnarray}
\textrm{div}_yb(y) = 0\quad \mbox{in $Y_{\rm g},$}\quad b(y)\cdot n(y) = 0\quad \mbox{in $\Gamma,$}
\end{eqnarray}
where $n(y)$ is the outward unit normal to $Y_{\rm g}.$ 

\item[(H2)] The molecular diffusion coefficient is periodic, isotropic (tensor) and restricted to the gas region, i.e., $D(y) \in L^{\infty}_{\#}(Y_{\rm g})$. In addition, it satisfies the uniformly coercivity property, i.e., there exists a constant $C>0$ such that for any $\xi \in \Real^d$ it holds
$$D(y)\xi\cdot\xi \ge C|\xi|^2 \mbox{ a.e. in $Y_{\rm g}.$ }$$

\item[(H3)] The thermal conductivity coefficient is periodic, isotropic (tensor) and it is defined in both the gas and the solid regions, i.e., $\lambda(y) \in L^{\infty}_{\#}(Y)$ and
\begin{align}
\lambda(y) = 
\begin{dcases}
\lambda_{\rm g}, &\mbox{in $Y_{\rm g}$,}\\
\lambda_{\rm s}, &\mbox{in $Y_{\rm s}.$}
\end{dcases}
\end{align}
In addition, it satisfies the uniformly coercivity property, i.e., there exists a constant $C>0$ such that for any $\xi \in \Real^d$ it holds
$$\lambda(y)\xi\cdot\xi \ge C|\xi|^2 \mbox{ a.e. in $Y.$ }$$
\item[(H4)] The heat capacity is periodic and it is defined in the gas and the solid regions, i.e.,  $c(y) \in L^{\infty}_{\#}(Y)$ and
\begin{align}
c(y) = 
\begin{dcases}
c_{\rm g}, &\mbox{in $Y_{\rm g}$,}\\
c_{\rm s}, &\mbox{in $Y_{\rm s}.$}
\end{dcases}
\end{align}

\item[(H5)] The nonlinear function \mbox{$f:\Real \rightarrow \Real$} is given such that $f(\alpha)$ is positive and bounded for $\alpha\in (0, T_b)$, where $T_b > T_a$ and $T_b$ is the combustion temperature and $f=0,$ for $\alpha \leq 0$. A direct consequence of $(H5)$ is the following exponentiation property:
\begin{align}\label{mono}
f \geq f^2 \geq \ldots \geq f^n,~\mbox{for all $n\in \mathbb{N}.$}
\end{align}
\begin{figure}[!h]
\centering
\includegraphics[scale=0.35]{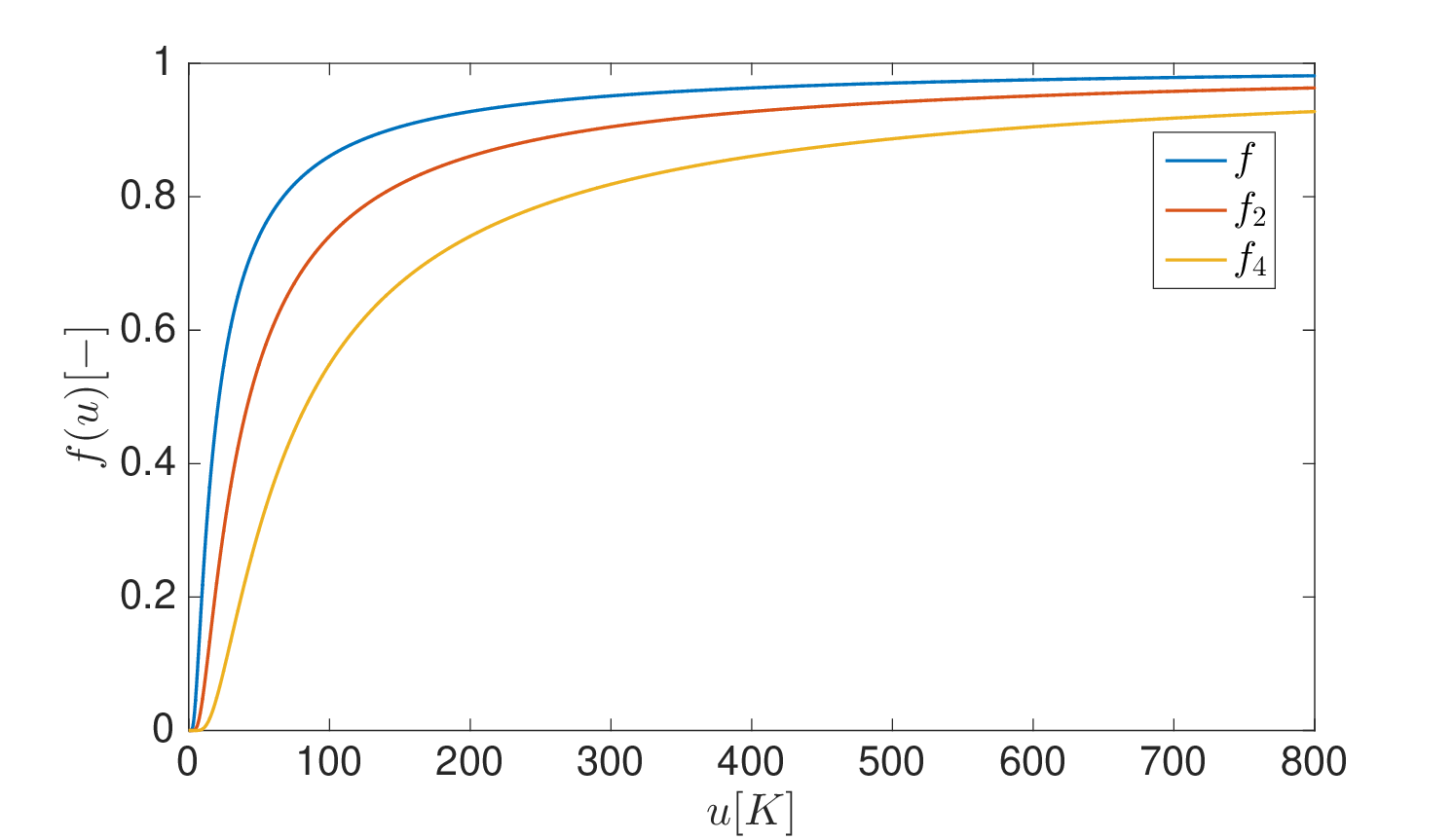}
\caption{The profile of the function $f$ and its exponentiation property illustrated for $n=2,4$. $T_b=800 K$ and $T_a=15 K.$}
\end{figure}
\item[(H6)] The initial data are non-negative, i.e., $\C_0\geq 0, T_0 \geq 0$ and $\C_0\in L^2(\mathbb{R}^d)\cap L^\infty(\Real^d)$, $T_{0}\in  L^2(\mathbb{R}^d)\cap L^\infty(\Real^d)$.
\end{itemize}
\begin{remark} Assumptions (H2)--(H4) are physically motivated in the sense that heat capacities, heat conductivities and diffusion coefficients for isotropic materials, as described in the different regions, are strictly positive real constants, i.e., $c_{\rm g}, c_{\rm s}, \lambda_{\rm g},\lambda_{\rm s}, D\in (0,\infty).$ Depending on the material, the quantities $\lambda_{\rm g},\lambda_{\rm s}$ and $D$ could be anisotropic and hence tensors. $(H3)$ is assumed based on the initial state of the process. On the other hand, $(H1)$ is a mathematical restriction which strongly delimits the applicability of the method. For instance, note that if the velocity field $b^\eps$ would come from a background fluid satisfying the compressible Navier-Stokes equations, then it would automatically depend on both variables $x$ and $x/\eps$, and hence this approach would not be applicable anymore in a straightforward fashion; see also Remark 2 in \cite{Mik}.  Assumption $(H5)$ is a form of the non-linear kinetics generally accepted in the combustion engineering community. 
 \end{remark}
 We introduce the function space
 $$V^\eps:=\{(\phi,\psi)\in H^1(\Omega^\eps)\times H^1(\Omega_{\rm g}^\eps)\}.$$ 
\begin{definition}\label{X1} Assuming $(H5)$, the couple $(T^\eps,\C^\eps)\in L^2(S;H^1(\Omega^\eps))\times L^2(S; H^1(\Omega_{\rm g}^\eps))$ is called a weak solution to $\mathcal{P}^\eps$ if for all $t\in S$ and $(T^\eps(\cdot,0),\C^\eps(\cdot,0))=(T_{0},\C_{0})$,  the following identities hold
\begin{align}\label{Teq1}
&\int\limits_{\Om^\eps}c^\eps\dfrac{\p T^\eps}{\p t}\phi~dx  + \dfrac{1}{\eps}\int\limits_{\Om^\eps_{\rm g}}c_{\rm g}b^\eps\cdot\Na T^\eps\phi~dx+\int\limits_{\Om^\eps}\La^\eps\Na T^\eps\cdot \Na\phi~dx-\dfrac{Q}{\eps}\int\limits_{\Gamma^\eps}W^\eps\phi d\sigma = 0,
\end{align}
 \begin{equation}\label{Ceq1}
\int\limits_{\Om^\eps_{\rm g}}\dfrac{\p \C^\eps}{\p t}\psi  dx  + \dfrac{1}{\eps}\int\limits_{\Om^\eps_{\rm g}}b^\eps\cdot\Na \C^\eps\Na\psi dx + \int\limits_{\Om^\eps_{\rm g}}D^\eps\Na \C^\eps\cdot \Na\psi~dx+\dfrac{1}{\eps}\int\limits_{\Gamma^\eps}W^\eps\psi d\sigma=0
\end{equation}
for all test functions $(\phi,\psi)\in V^\eps$.
\end{definition}
\begin{lemma}\label{apriori}
Let $(T^\eps,\C^\eps)$ be a weak solution to $\mathcal{P}^\eps$ in the sense of Definition \ref{X1}. Assume $(H1)$--$(H6)$. There exists a constant $C>0$, which is independent of the choice of $\eps$, such that the following a priori bound is satisfied
 \begin{eqnarray}\label{Apeq0}
||T^\eps||_{L^\infty(S;L^2(\Om^\eps))} + ||\C^\eps||_{L^\infty(S;L^2(\Om^\eps_{\rm g}))} + ||\Na T^\eps||_{L^2(S\times \Om^\eps)} + ||\Na \C^\eps||_{L^2(S\times \Om^\eps_{\rm g})}\\\nonumber
+ \sqrt{\eps}||\W^\eps||_{L^\infty(S;L^2(\Gamma^\eps))} \leq C\left(||T_0||_{L^2(\Real^d)} + ||\C_0||_{L^2(\Real^d)}\right),
  \end{eqnarray}
where $\W^\eps = \dfrac{1}{\eps}\C^\eps f(T^\eps).$
\end{lemma}

\begin{proof}
The desired a priori bound can be obtained by testing \eqref{Teq1} and \eqref{Ceq1} by $(\phi,\psi)=(T^\eps, Q\C^\eps)$ and summing up the resulting expressions. Starting off with \eqref{Ceq1}, we set $\psi=Q\C^\eps$ and get
\begin{align}\label{Apeq1}
& \dfrac{Q}{2}\dfrac{d}{dt}||\C^\eps(t)||^2_{L^2(\Om^\eps_{\rm g})} +  \dfrac{Q}{\eps}( b^{\eps}\Na \C^\eps,\C^\eps)  + \dfrac{Q}{2}||\Na \C^\eps(t)||^2_{L^2(\Om^\eps_{\rm g})}\\\nonumber
&+ \dfrac{QA}{\eps}\int\limits_{\Gamma^\eps}(\C^\eps)^2 f(T^\eps)~d\sigma =0.
\end{align} 
The convection term vanishes since by $(H1)$ we have that 
 \begin{align*}
 \int\limits_{\Omega_{\rm g}^\eps} b^\eps\cdot\Na \C^\eps \C^\eps dx=-\frac{1}{2}\int\limits_{\Omega_{\rm g}^\eps} {\rm div}\left( b^\eps |\C^\eps|^2\right)dx=-\frac{1}{2} \int\limits_{\Gamma^\eps}|C^\eps |^2  b^\eps\cdot  n d\sigma=0.
 \end{align*}
By $(H5)$, we apply the property $f \geq f^2$ on the nonlinear term in \eqref{Apeq1}, i.e.,
\begin{align}
 \dfrac{QA}{\eps}\int\limits_{\Gamma^\eps}(\C^\eps)^2 f(T^\eps)~d\sigma \geq \dfrac{QA}{\eps}\int\limits_{\Gamma^\eps}|\C^\eps f(T^\eps)|^2~d\sigma=\eps C \int\limits_{\Gamma^\eps}\Big|\dfrac{1}{\eps}\C^\eps f(T^\eps)\Big|^2d\sigma.
\end{align}
Integrating \eqref{Apeq1} with respect to time leads to the following
\begin{align}\label{Apeq2}
&||\C^\eps(t)||^2_{L^{\infty}(S;L^2(\Om^\eps_{\rm g}))} + ||\Na \C^\eps(t)||^2_{L^2(S\times\Om^\eps_{\rm g})}
+\eps \Big\|\dfrac{1}{\eps}\C^\eps f(T^\eps)\Big\|^2_{L^2(S\times \Gamma^\eps)}\\\nonumber
& \leq C\|C_0\|^2_{L^2(\Real^2)}.
\end{align}
Next, taking $\phi=T^\eps$ in \eqref{Teq1}, we obtain  
\begin{align}\label{Apeq3}
&\dfrac{1}{2}\dfrac{d}{dt}||T^\eps(t)||^2_{L^2(\Om^\eps)} +  \dfrac{1}{\eps}(c_{\rm g} b^{\eps}\Na T^\eps,T^\eps) + ||\Na T^\eps(t)||^2_{L^2(\Om^\eps)}\\
&\leq\dfrac{QA}{\eps}\int\limits_{\Gamma^\eps}\C^\eps f(T^\eps)T^\eps~d\sigma.\nonumber
\end{align}
As in \eqref{Ceq1}, the convection term vanishes due to (H1). The nonlinear term on the right hand side of \eqref{Apeq3} can be estimated as follows:
 \begin{align}\label{Apeq4}
 \dfrac{QA}{\eps}\int\limits_{\Gamma^\eps}\C^\eps f(T^\eps)T^\eps~d\sigma dt \leq \dfrac{\delta}{2\eps}\int\limits_{\Gamma^\eps}|T^\eps|^2~d\sigma + \dfrac{\eps}{2\delta^\prime}\int\limits_{\Gamma^\eps}|\W^\eps|^2~d\sigma,
 \end{align}
 where $\W^\eps = \eps^{-1} \C^\eps f(T^\eps)$ and $\delta=\delta(\delta^\prime, Q, A) >0, \delta^\prime >0.$ The last surface integral in \eqref{Apeq4} is bounded due to \eqref{Apeq2}. To estimate the first surface integral in \eqref{Apeq4}, we choose $\delta=\eps^2$ and apply the trace inequality for $\eps$-dependent hypersurfaces (see, e.g., Lemma 3 of \cite{Hornung91} and Lemma 2.7 of \cite{Sara}):
\begin{align}\label{Apeq5}
\eps \int\limits_{\Gamma^\eps}|T^\eps|^2~d\sigma \leq C\int\limits_{\Om^\eps}(|T^\eps|^2 + \eps^2|\Na T^\eps|^2)~dx.
\end{align}
Integrating \eqref{Apeq3}-\eqref{Apeq5} with respect to time leads to
\begin{align}\label{Apeq6}
C\|T^\eps\|^2_{L^\infty(S;L^2(\Om^\eps))} + (1-\eps^2C)\|\Na T^\eps(t)\|^2_{L^2(S\times \Om^\eps)} \leq C\|T_0\|^2_{L2(\Real^2)}.
\end{align}
By choosing $\eps$ small enough and combining \eqref{Apeq2} and \eqref{Apeq6}, we deduce \eqref{Apeq0}.
\end{proof}

\subsection{Compactness in the moving coordinate frame}
\paragraph{Restriction to a compact subset of $\Real^d$}
Since the problem is posed in an unbounded domain, Rellich Theorem does not apply, and hence there is no hope of establishing the compactness of the sequence of functions $(T^\eps,\C^\eps)$. Thus, we first restrict the problem to a compact subset of $\Real^d$. However, due to the presence of the dominant convection in the problem, nontrivial convergence of the family of solutions $T^\eps(t,x)$ and $\C^\eps(t,x)$ is not viable in fixed coordinates $x$.~Thus, we
will follow the argument in \cite{Harsha,Maro} and prove compactness results in the moving coordinates  $\big(x + \eps^{-1}b_T, x + \eps^{-1}b_\C\big).$ To work in the latter frame of reference, we will introduce functions in the new reference frame, following a similar notation as described in \cite{Harsha}:
For any given function $u^\eps(t,x)$ in the fixed coordinate frame, we define its counterpart in the moving coordinates as
\begin{align}\label{mov1}
\hat{u}^\eps(t,x) = u^\eps\Big(t,x+\dfrac{b^*}{\eps}t\Big),
\end{align}
where $b^*$ is a constant vector representing the average velocities, $b_T$ and $b_\C$, of the temperature and concentration fields respectively. For drifts moving in the opposite direction to the direction of \eqref{mov1}, (cf. Section \ref{rigor} for the definition of the two-scale convergence with drift.), we will use
\begin{align}\label{mov2}
\check{u}^\eps(t,x) = u^\eps\Big(t,x-\dfrac{b^*}{\eps}t\Big).
\end{align}
Additionally, a consequence of \eqref{mov1} (respectively \eqref{mov2}) is to characterize the mass conservation of the porous medium in the moving coordinates. For that, we assume the motion of the domain  depends on the drift of the governing physical process, i.e., for the thermal dispersion in the porous medium, we write
\begin{align}\label{mov3}
\hat\Om^\eps= \Big\{x + \dfrac{b_T}{\eps}t ~\Big| ~x\in \Om^\eps\Big\},
\end{align}
and for mass dispersion in the gaseous region of the porous medium, we will use the definition
\begin{align}\label{mov4}
\hat\Om^\eps_{\rm g}= \Big\{x + \dfrac{b_\C}{\eps}t ~\Big| ~x\in \Om^\eps_{\rm g}\Big\}.
\end{align}
\begin{remark}
For the case of filtration combustion of gases, i.e., without a thermal spread in the solid region, the disparity in the moving coordinates, given by \eqref{mov1}--\eqref{mov4}, no longer applies since in such a case it holds that $b_T=b_\C.$
\end{remark}
\begin{lemma}\label{restEq1}
Let $(T^\eps, \C^\eps)$ be the solution of $\mathcal{P}^\eps$. Then, for any $\delta>0$, there exists $R(\delta)>0$ such that for all $t\in \bar{S}$
\begin{eqnarray}
\big\|\widehat{T}^\eps(t,x)\big\|_{L^2(\Om^\eps(t)\cap (\Real^d\setminus Q_{R(\delta)}))}\leq \delta,~~\big\|\widehat{\C}^\eps(t,x)\big\|_{L^2(\Om_{\rm g}^\eps(t)\cap (\Real^d\setminus Q_{R(\delta)}))}\leq \delta,
\end{eqnarray}
where 
\begin{align}\label{cube}
Q_{R(\delta)}=\big]-R(\delta), R(\delta)\big[^d \in \Real^d.
\end{align}
\end{lemma}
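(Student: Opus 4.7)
The plan is to establish uniform-in-$\eps$ tail bounds via a weighted energy method, testing the weak formulation with the solutions themselves multiplied by the square of a cut-off that travels with the effective drift. Let $\chi_R\in C^\infty(\Real^d;[0,1])$ satisfy $\chi_R\equiv 0$ on $Q_R$, $\chi_R\equiv 1$ on $\Real^d\setminus Q_{2R}$, $|\Na\chi_R|\leq c/R$, $|\Na^2\chi_R|\leq c/R^2$, and introduce the travelling cut-offs $\chi^\eps_{R,T}(t,x):=\chi_R(x-b_T t/\eps)$ and $\chi^\eps_{R,\C}(t,x):=\chi_R(x-b_\C t/\eps)$, keyed to the effective drifts $b_T,b_\C$ of Section \ref{main-result}. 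Testing \eqref{Teq1} with $T^\eps(\chi^\eps_{R,T})^2$ and \eqref{Ceq1} with $Q\C^\eps(\chi^\eps_{R,\C})^2$, I target a closed Gronwall-type inequality for the tail energy
\begin{align*}
E^\eps_R(t):=\|T^\eps\chi^\eps_{R,T}\|^2_{L^2(\Om^\eps)}+Q\|\C^\eps\chi^\eps_{R,\C}\|^2_{L^2(\Om^\eps_{\rm g})}.
\end{align*}

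The entire difficulty is concentrated in the $\eps^{-1}$ convective terms. Using $\pt\chi^\eps_{R,T}=-\eps^{-1}b_T\cdot\Na\chi^\eps_{R,T}$ together with the divergence-free property of $b$ and $b\cdot n|_\Gamma=0$ from $(H2)$, integration by parts reduces the combined contribution of ``time derivative of the cut-off'' plus ``microscopic convection'' to the residue
\begin{align*}
\frac{1}{\eps}\int_{\Om^\eps}(T^\eps)^2\bigl(c\,b_T-c_{\rm g}\widetilde b^\eps\bigr)\cdot\Na\chi^\eps_{R,T}\,\chi^\eps_{R,T}\,dx,
\end{align*}
where $\widetilde b^\eps$ is extended by zero into the solid as in \eqref{extVel}. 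By the defining property of $b_T$, the $Y$-periodic field $c\,b_T-c_{\rm g}\widetilde b$ has zero cell-average, so it admits a skew-symmetric $Y$-periodic matrix potential $\Theta\in L^\infty(Y)^{d\times d}$ with $\Na_y\!\cdot\!\Theta=c\,b_T-c_{\rm g}\widetilde b$. Writing $\eps^{-1}(c\,b_T-c_{\rm g}\widetilde b^\eps)=\Na_x\!\cdot\!\Theta^\eps$ and integrating by parts once more shifts the $\eps^{-1}$ singularity entirely onto the slow cut-off, leaving the $O(1)$ expressions $\int\Theta^\eps(T^\eps)^2\Na^2\chi_R$ and $\int\Theta^\eps T^\eps\Na T^\eps\!\cdot\!\Na\chi_R$, each controlled by $c(R^{-2}\|T^\eps\|^2_{L^2}+R^{-1}\|T^\eps\|_{L^2}\|\Na T^\eps\|_{L^2})$ via Cauchy--Schwarz. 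The same manipulation applies to \eqref{Ceq1}, while the Arrhenius surface contribution
\begin{align*}
\frac{1}{\eps}\int_{\Gamma^\eps}W^\eps\bigl(QT^\eps(\chi^\eps_{R,T})^2-\C^\eps(\chi^\eps_{R,\C})^2\bigr)\,d\sigma
\end{align*}
is tamed using $f(T^\eps)\leq\|f\|_{L^\infty}$ from $(H4)$, the $\eps$-scaled trace inequality on $\Gamma^\eps$, and the a priori bounds of Lemma \ref{apriori}.

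Absorbing the $R^{-1}$ gradient cross-terms into the diffusive dissipation, permissible for $R$ sufficiently large, yields
\begin{align*}
\frac{d}{dt}E^\eps_R(t)\leq K\,E^\eps_R(t)+\frac{\kappa}{R^2}
\end{align*}
with $K,\kappa>0$ independent of $\eps$. Gronwall's lemma provides $E^\eps_R(t)\leq e^{KT_f}(E^\eps_R(0)+\kappa T_f/R^2)$ for all $t\in\bar S$; since $E^\eps_R(0)\leq\|T_0\chi_R\|^2_{L^2(\Real^d)}+Q\|\C_0\chi_R\|^2_{L^2(\Real^d)}\to 0$ as $R\to\infty$ by $(H3)$ and dominated convergence, a radius $R(\delta)$ can be chosen large enough that $\sup_{t\in\bar S}E^\eps_R(t)<\delta^2$ uniformly in $\eps$. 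Rephrasing in moving coordinates through \eqref{mov1}--\eqref{mov4} delivers the claim. The decisive obstacle is plainly the neutralization of the $\eps^{-1}$ drift: the skew-symmetric cell potential representation -- enabled by the divergence-free hypothesis $(H2)$ and the identification of $b_T$ and $b_\C$ as weighted cell averages -- converts the singular convective correction into an $O(1)$ boundary-layer perturbation of the cut-off gradient, which is exactly what makes the Gronwall argument uniform in $\eps$.
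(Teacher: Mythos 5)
Your overall strategy coincides with the paper's: test with the solutions multiplied by a cut-off travelling with the effective drifts, cancel the leading $\eps^{-1}$ convection using ${\rm div}_y b=0$ and $b\cdot n|_\Gamma=0$, recast the residual mean-zero drift $c\,b_T-c_{\rm g}\widetilde b$ as a divergence of a periodic potential to trade the $\eps^{-1}$ for an $O(1)$ oscillating coefficient, and close with $O(1/R)$ bounds from Lemma \ref{apriori} plus the decay of the initial-data tails. However, two of your steps do not go through as written. First, the skew-symmetric matrix potential $\Theta$ with $\Na_y\!\cdot\!\Theta=c\,b_T-c_{\rm g}\widetilde b$ does not exist in general: skew-symmetry forces the represented field to be divergence-free on the torus, but $\Na_y\!\cdot\!(c(y)\,b_T)=b_T\cdot\Na_y c$ is a surface measure on $\Gamma$ whenever $c_{\rm g}\neq c_{\rm s}$, so the field is only mean-zero, not solenoidal. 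The mean-zero property is what actually matters, and it is exploited in the paper by solving the Poisson problems \eqref{aux1} for $\Pi_i$ and $\Sigma_i$ (with the flux condition $\Na\Pi_i\cdot n=0$, resp. $\Na\Sigma_i\cdot n=0$, on $\Gamma$); the potential is then $\Na\Pi_i$, which is not skew-symmetric but serves the same purpose. The boundary condition is not cosmetic: for the concentration equation, posed in the perforated domain $\Om^\eps_{\rm g}$, your second integration by parts generates surface integrals on $\Gamma^\eps$ involving $\Theta^\eps n$, and a skew-symmetric potential cannot in general be arranged to annihilate them, whereas the Neumann condition in \eqref{aux1} does exactly that.

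Second, your treatment of the Arrhenius surface term does not close. Bounding $\frac{1}{\eps}\int_{\Gamma^\eps}W^\eps(\cdots)\,d\sigma$ by $\|f\|_{L^\infty}$, the scaled trace inequality $\|u\|^2_{L^2(\Gamma^\eps)}\leq C(\eps^{-1}\|u\|^2_{L^2(\Om^\eps)}+\eps\|\Na u\|^2_{L^2(\Om^\eps)})$ and the a priori bound $\sqrt{\eps}\,\|\W^\eps\|_{L^2(\Gamma^\eps)}\leq C$ yields, after the bookkeeping, a quantity of order $\eps^{-1}$, not $O(1)$. The paper instead relies on the sign structure of the coupled test pair $(T^\eps\check\psi_{R,T},\,Q\,\C^\eps\check\psi_{R,\C})$ together with $(H4)$: the term $\frac{QA}{\eps}\int_{\Gamma^\eps}|\C^\eps|^2 f(T^\eps)\check\psi_{R,\C}\,d\sigma$ is nonnegative and is kept on the dissipative side (using $f\geq f^2\geq 0$), so it never needs to be estimated in absolute value. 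Since you already test with the same coupled pair, this fix is available to you, but as stated your absolute-value estimate would fail. With these two repairs your argument reduces to the paper's proof; the Gronwall step is then unnecessary, as a direct integration in time already gives the uniform bound $C/R$ plus the vanishing initial tail.
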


\begin{proof}
The proof goes in a similar manner as in \cite{Harsha}. We adapt their argument for our purpose. We introduce a smooth cut-off function $\psi \in C^{\infty}(\Real)$ satisfying $0 \leq \psi(r) \leq 1, \psi = 0$ for $r\leq 1, \psi = 1$ for $r \geq 2.$~Then, for $x\in \Real^d,$ we denote $\psi_R(x)=\psi(|x|/R)$ and choose as test functions $\big(T^\eps\check{\psi}_{R,T}, Q\C^\eps\check{\psi}_{R,\C}\big)$, in the variational formulation \eqref{Teq1}-\eqref{Ceq1}, where 
$$
\check{\psi}_{R,T}(t,x) = \psi_R\left(x-\dfrac{b_T}{\eps}t\right),~\check{\psi}_{R,\C}(t,x) = \psi_R\left(x-\dfrac{b_\C}{\eps}t\right).
$$
Next, we integrate by parts in time the terms with the time derivatives to obtain
\begin{align*}
&\int\limits_0^t\!\int\limits_{\Om^\eps}c^\eps\dfrac{\p T^\eps}{\p t}T^\eps\check{\psi}_{R,T}~dxds = \dfrac{1}{2\eps}\int\limits_0^t\!\int\limits_{\Om^\eps}c^\eps|T^\eps|^2b_T\cdot\Na\check{\psi}_{R,T}~dxds\\
&+\int\limits_{\Om^\eps}c^\eps|T^{\eps}(t,x)|^2\check{\psi}_{R,T}(t,x)~dx-\int\limits_{\Om^\eps}c^\eps|T_0|^2\check{\psi}_{R}(x)~dx
\end{align*}
and
\begin{align*}
&Q\int\limits_0^t\!\int\limits_{\Om^\eps_{\rm g}}\dfrac{\p \C^\eps}{\p t}\C^\eps\check{\psi}_{R,\C}~dxds = \dfrac{Q}{2\eps}\int\limits_0^t\!\int\limits_{\Om^\eps_{\rm g}}|\C^\eps|^2b_\C\cdot\Na\check{\psi}_{R,\C}~dxds\\
&+Q\int\limits_{\Om^\eps_{\rm g}}|\C^{\eps}(t,x)|^2\check{\psi}_{R,\C}(t,x)~dx-Q\int\limits_{\Om^\eps_{\rm g}}|\C_0|^2\check{\psi}_{R}(x)~dx.
\end{align*}
In turn, we apply integration by parts on the terms with the space derivatives. Starting with the convection terms, we obtain
\begin{align*}
\dfrac{1}{\eps}\int\limits_0^t\!\int\limits_{\Om^\eps}c_{\rm g}b^\eps\cdot\Na T^\eps T^\eps \check{\psi}_{R,T}~dxds = -\dfrac{1}{2\eps}\int\limits_0^t\!\int\limits_{\Om^\eps}c_{\rm g}|T^\eps|^2b^\eps\cdot\Na \check{\psi}_{R,T}~dxds
\end{align*}
and
\begin{align*}
\dfrac{Q}{\eps}\int\limits_0^t\!\int\limits_{\Om^\eps_{\rm g}}b^\eps\cdot\Na \C^\eps \C^\eps \check{\psi}_{R,\C}~dxds = -\dfrac{Q}{2\eps}\int\limits_0^t\!\int\limits_{\Om^\eps_{\rm g}}|\C^\eps|^2b^\eps\cdot\Na \check{\psi}_{R,\C}~dxds
\end{align*}
and then the diffusive terms 
\begin{align*}
&-\int\limits_0^t\!\int\limits_{\Om^\eps}{\rm div}(\La^\eps\Na T^\eps)T^\eps\check{\psi}_{R,T}~dxds = \int\limits_0^t\!\int\limits_{\Om^\eps}\La^\eps\Na T^\eps\cdot\Na T^\eps\check{\psi}_{R,T}~dxds\\
&+\dfrac{1}{2}\int\limits_0^t\!\int\limits_{\Om^\eps}\La^\eps\Na|T^\eps|^2\Na\check{\psi}_{R,T}~dxds
- \dfrac{QA}{\eps}\int\limits_0^t\!\int\limits_{\Gamma^\eps}\C^\eps T^\eps f(T^\eps)\check{\psi}_{R,T}~d\sigma ds
\end{align*}
and
\begin{align*}
&-Q\int\limits_0^t\!\int\limits_{\Om^\eps_{\rm g}}{\rm div}(D^\eps\Na \C^\eps)\C^\eps\check{\psi}_{R,\C}~dxds = Q\int\limits_0^t\!\int\limits_{\Om^\eps_{\rm g}}D\Na \C^\eps\cdot\Na \C^\eps\check{\psi}_{R,\C}dxds\\
&+\dfrac{Q}{2}\int\limits_0^t\!\int\limits_{\Om^\eps_{\rm g}}D^\eps\Na|\C^\eps|^2\Na\check{\psi}_{R,\C}~dxds
+ \dfrac{QA}{\eps}\int\limits_0^t\!\int\limits_{\Gamma^\eps}|\C^\eps|^2 f(T^\eps)\check{\psi}_{R,\C}~d\sigma ds.
\end{align*}
The surface integrals can be reduced to an energy-like form as follows:
\begin{align*}
&\eps QA \int\limits_0^t\!\int\limits_{\Gamma^\eps}\Big|\dfrac{1}{\eps}\C^\eps f(T^\eps)\Big|^2\check{\psi}_{R,\C}~d\sigma=\dfrac{QA}{\eps}\int\limits_0^t\!\int\limits_{\Gamma^\eps}|\C^\eps f(T^\eps)|^2\check{\psi}_{R,\C}~d\sigma ds\\
&\leq \dfrac{QA}{\eps}\int\limits_0^t\!\int\limits_{\Gamma^\eps}(\C^\eps)^2 f(T^\eps)\check{\psi}_{R,\C}~d\sigma ds
\end{align*}
since $f^2(T) \leq f(T)$ by $(H5)$. The next surface integral can be estimated as follows:
\begin{align}\label{EstEq1}
\dfrac{QA}{\eps}\int\limits_0^t\!\int\limits_{\Gamma^\eps}\C^\eps T^\eps f(T^\eps)\check{\psi}_{R,T}~d\sigma ds \leq \dfrac{C_1\xi}{\eps}\int\limits_0^t\!\int\limits_{\Gamma^\eps}|T^\eps|^2~d\sigma ds + \eps C_2 \int\limits_0^t\!\int\limits_{\Gamma^\eps}|\W^\eps|^2~d\sigma ds,
\end{align}
where $\W^\eps = \eps^{-1}\C^\eps f(T^\eps), C_1=C(Q, A, ||\check{\psi}_{R,T}||_{L^\infty(\Om^\eps)})$ and $C_2=C\bigg(\dfrac{1}{\xi}, ||\check{\psi}_{R,T}||_{L^\infty(\Om^\eps)}\bigg)$, with $\xi >0.$ The last integral in \eqref{EstEq1} containing $\W^\eps$ is bounded by virtue of Lemma \ref{apriori}. By choosing $\xi=\eps^2$, the first surface integral in \eqref{EstEq1} reduces by the trace inequality \cite{Hornung91}, to
\begin{align}\label{EstEq2}
\eps C_1 \int\limits_0^t\!\int\limits_{\Gamma^\eps}|T^\eps|^2~d\sigma ds \leq C\big(||\check{\psi}_{R,T}||_{L^\infty(\Om^\eps)}\big) \int\limits_0^t\!\int\limits_{\Om^\eps}\big(|T^\eps|^2 + \eps^2|\Na T^\eps|^2\big)~dx ds.
\end{align}
Note that the estimates in \eqref{EstEq1} and \eqref{EstEq2} tend to zero since by the definition of $\psi_{R,T}, ||\check{\psi}_{R,T}||_{L^\infty(\Om^\eps)}\rightarrow 0$ as $R$ tends to $\infty$. 
Combining all the terms together, we have
\begin{align}
&\int\limits_{\Om^\eps}c^\eps|T^{\eps}(t,x)|^2\check{\psi}_{R,T}(t,x)~dx + \int\limits_0^t\!\int\limits_{\Om^\eps}\La^\eps\Na T^\eps\cdot\Na T^\eps\check{\psi}_{R,T}~dxds \\
&+Q\int\limits_{\Om^\eps_{\rm g}}|\C^{\eps}(t,x)|^2\check{\psi}_{R,\C}(t,x)~dx\\
&+ Q\int\limits_0^t\!\int\limits_{\Om^\eps_{\rm g}}D^\eps\Na \C^\eps\cdot\Na \C^\eps\check{\psi}_{R,\C}~dxds + \eps QA \int\limits_0^t\!\int\limits_{\Gamma^\eps}\Big|\dfrac{1}{\eps}\C^\eps f(T^\eps)\Big|^2\check{\psi}_{R,\C}~d\sigma\\
\label{convEq1}
& \leq \dfrac{1}{2\eps}\int\limits_0^t\!\int\limits_{\Om^\eps}c^\eps|T^\eps|^2(b^\eps-b_T)\cdot\Na\check{\psi}_{R,T}~dxds + \dfrac{Q}{2\eps}\int\limits_0^t\!\int\limits_{\Om^\eps_{\rm g}}|\C^\eps|^2(b^\eps-b_\C)\cdot\Na\check{\psi}_{R,\C}~dxds\\
\label{diffEq1}
&-\dfrac{1}{2}\int\limits_0^t\!\int\limits_{\Om^\eps}\La^\eps\Na|T^\eps|^2\Na\check{\psi}_{R,T}~dxds -\dfrac{Q}{2}\int\limits_0^t\!\int\limits_{\Om^\eps_{\rm g}}D^\eps\Na|\C^\eps|^2\Na\check{\psi}_{R,\C}~dxds\\
\label{initEq1}
&+\int\limits_{\Om^\eps}c^\eps|T_0|^2\check{\psi}_{R,T}(x)~dx + \int\limits_{\Om^\eps_{\rm g}}|\C_0|^2\check{\psi}_{R,\C}(x)~dx.
\end{align}
To deal with the singular nature of the convective terms on the right hand side of \eqref{convEq1}, we introduce two auxiliary problems for the unknown quantities, $(\Pi, \Sigma)\in [H^1_{\#}(Y)/\Real]^d \times [H^1_{\#}(Y_{\rm g})/\Real]^d$:
\begin{align}\label{aux1}
\begin{dcases}
-\Delta \Pi_i(y) = c(y)(b_{T,i}-b_{i}(y)) &\mbox{in $Y,$}\\
-\Na\Pi_i\cdot  n = 0 &\mbox{on $\Gamma,$}\\
\mbox{$\Pi_i(y)$ is $Y$-periodic}
\end{dcases}~~
\begin{dcases}
-\Delta \Sigma_i(y) = b_{\C,i}-b_{i}(y) &\mbox{in $Y_{\rm g},$}\\
-\Na\Sigma_i\cdot  n = 0 &\mbox{on $\Gamma,$}\\
\mbox{$\Sigma_i(y)$ is $Y$-periodic,}
\end{dcases}
\end{align}
where the solutions $\Pi_i$ and $\Sigma_i$ are unique up to additive constants, by virtue of the expressions \eqref{drift1} and \eqref{drift2} defining the effective drift constants. Since $\eps\Na \Pi^\eps_i(x) = (\Na_y\Pi_i)(x/\eps)$ (similarly for the variable $\Sigma$), periodic extensions of \eqref{aux1} in $\Real^d$ allow us to write:
\begin{align}\label{aux2}
\begin{dcases}
-\eps^2\Delta\Pi^\eps_i(x) = c^\eps(x)(b_{T,i}-b^\eps_{i}(x)) &\mbox{in $\Om^\eps,$}\\
-\Na\Pi^\eps_i\cdot  n = 0 &\mbox{on $\Gamma^\eps,$}\\
\mbox{$\Pi^\eps_i(x)$ is $\eps$-periodic}
\end{dcases}~~
\begin{dcases}
-\eps^2\Delta \Sigma^\eps_i(x) = b_{\C,i}-b^\eps_{i}(x) &\mbox{in $\Om^\eps_{\rm g},$}\\
-\Na\Sigma^\eps_i\cdot  n = 0 &\mbox{on $\Gamma^\eps,$}\\
\mbox{$\Sigma^\eps_i(x)$ is $\eps$-periodic.}
\end{dcases}
\end{align}
\begin{remark}
For brevity of presentation, we have assumed suitable extensions for the velocity field and parameters defined in $Y$, respectively in $\Om^\eps$. Specifically, in situations as in \eqref{aux1} and \eqref{aux2}, we introduce distinct problems in the solid and gas-filled regions of $Y$ (respectively $\Om^\eps$) as follows:
\begin{align}\label{aux1Remark}
\begin{dcases}
-\Delta \Pi_{g,i}(y) = c_{\rm g}(b_{T,i}-b_{i}(y)) &\mbox{in $Y_{\rm g},$}\\
-\Delta \Pi_{s,i}(y) = c_{\rm s}b_{T,i}              &\mbox{in $Y_{\rm s},$}\\
(\Na\Pi_{g,i}-\Na\Pi_{s,i})\cdot  n = 0 &\mbox{on $\Gamma,$}\\
\mbox{$\Pi_i(y)$ is $Y$-periodic.}
\end{dcases}~
\begin{dcases}
-\eps^2\Delta\Pi^\eps_i(x) = c_{\rm g}(b_{T,i}-b^\eps_{i}(x)) &\mbox{in $\Om^\eps_{\rm g},$}\\
-\eps^2\Delta\Pi^\eps_i(x) = c_{\rm s}b_{T,i} &\mbox{in $\Om^\eps_{\rm s},$}\\
(\Na\Pi^\eps_{\textrm g,i}-\Na\Pi^\eps_{\textrm s,i})\cdot  n = 0 &\mbox{on $\Gamma^\eps,$}\\
\mbox{$\Pi^\eps_i(x)$ is $\eps$-periodic.}
\end{dcases}
\end{align}
\end{remark}
Substituting \eqref{aux2} in \eqref{convEq1} and integrating by part
\begin{align*}
\sum_{i=1}^{d}\int\limits^t_0\Bigg(\int\limits_{\Om^\eps}\eps\Na\Pi_i^\eps\cdot \Na\Big(|T^\eps|^2\p_{x_i}\check{\psi}_{R,T}\Big)~dx +\int\limits_{\Om^\eps_{\rm g}}\eps\Na\Sigma_i^\eps\cdot \Na\Big(|\C^\eps|^2\p_{x_i}\check{\psi}_{R,\C}\Big)~dx \Bigg)ds.
\end{align*}
By the definition of $\psi_{R,\alpha}$, we have $\norm{\Na\check{\psi}_{R,T}}_{L^\infty(\Om^\eps)} \leq C/R$ and $\norm{\Na\check{\psi}_{R,\C}}_{L^\infty(\Om^\eps_{\rm g})} \leq C/R$. Thus, the respective terms in \eqref{diffEq1} can be bounded as follows
\begin{align}\label{bound1}
\dfrac{C}{R}\left(\norm{\Na T^\eps}_{L^2(S\times \Om^\eps)}  + \norm{\Na \C^\eps}_{L^2(S\times \Om^\eps_{\rm g})}\right) \leq \dfrac{C}{R}.
\end{align}
The last inequality in \eqref{bound1} is a consequence of Lemma \ref{apriori}, which also imply that \eqref{convEq1} is bounded by $C/R.$ For sufficiently large $R$, \eqref{initEq1} tends to zero. Eventually, a simple change of coordinate and for sufficiently large $R$ we arrive at the desired result.  
\end{proof}
As next step, we establish an equicontinuity in time for the sequences of functions $(T^\eps, \C^\eps)$ in the moving coordinates. For this, we introduce the orthonormal basis $\{e_j\}_{j\in \mathbb{Z}^d} \in L^2(]0,1[^d)$, such that $e_j \in C^{\infty}_0([0,1]^d)$. Then, the functions $\{e_{jk}\}_{j,k\in \mathbb{Z}^d},$ where $e_{jk}(x) = e_j(x-k),$ form an orthonormal basis in $L^2(\Real^d).$ The equicontinuity in time for the sequences is stated in Lemma \ref{equiLemma}. The proof can be found in \ref{equiLemmaProof}.
\begin{lemma}\label{equiLemma}
Let $\delta t>0$ represent a small parameter for translation in time. Then, there exists a positive constant $C_{jk}$ independent of $\eps$ and $\delta t$ such that
\begin{align}
\label{equiLeEq0}
&\big|\big(\hat{T}^\eps(t + \delta t, \cdot), e_{jk}\big)_{L^2(\hat{\Om}^\eps(t + \delta t))} - \big(\hat{T}^\eps(t, \cdot), e_{jk}\big)_{L^2(\hat{\Om}^\eps(t))}\big| \leq C_{jk}\sqrt{\delta t}\\
\label{equiLeEq1}
&\big|\big(\hat{\C}^\eps(t + \delta t, \cdot), e_{jk}\big)_{L^2(\hat{\Om}^\eps_{\rm g}(t + \delta t))} - \big(\hat{\C}^\eps(t, \cdot), e_{jk}\big)_{L^2(\hat{\Om}^\eps_{\rm g}(t))}\big| \leq C_{jk}\sqrt{\delta t}.
\end{align}
\end{lemma}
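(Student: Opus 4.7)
The plan is to express each difference in \eqref{equiLeEq0}--\eqref{equiLeEq1} as the time integral of a time derivative, and then to bound the integrand using the weak formulation from Definition \ref{X1}, the cell correctors $\Pi^\eps_i$ and $\Sigma^\eps_i$ from \eqref{aux2}, and the a priori estimates of Lemma \ref{apriori}. After the natural change of spatial variable, write $G^\eps_T(t) := \big(\hat T^\eps(t),e_{jk}\big)_{L^2(\hat\Om^\eps(t))} = \int_{\Om^\eps}T^\eps(t,x)\phi^\eps_T(t,x)\,dx$ with the smooth compactly supported moving test function $\phi^\eps_T(s,x):=e_{jk}(x-b_Ts/\eps)$, and analogously for $G^\eps_\C$, $\phi^\eps_\C$. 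The decisive observation is $\p_s\phi^\eps_T=-\eps^{-1}b_T\cdot\nabla\phi^\eps_T$, so differentiating $G^\eps_T$ in time generates a singular $O(\eps^{-1})$ drift term that must combine with the pre-existing $\eps^{-1}b^\eps\cdot\nabla T^\eps$ from the PDE. The piecewise-constant weight $c$ that appears in \eqref{Teq1} is bounded and bounded away from zero, so it can be absorbed into $C_{jk}$ by splitting the integrals into the gas and solid parts.

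I would then substitute $\phi=\phi^\eps_T$ into \eqref{Teq1}, integrate by parts in time on $[t,t+\delta t]$, and integrate by parts in space the convection term (using $\nabla\cdot b^\eps=0$ and $b^\eps\cdot n|_{\Gamma^\eps}=0$). This produces three terms under the $s$-integral: a diffusive one, $-\int_{\Om^\eps}\La\nabla T^\eps\cdot\nabla\phi^\eps_T\,dx$; a reactive surface one, $\tfrac{QA}{\eps}\int_{\Gamma^\eps}W^\eps\phi^\eps_T\,d\sigma$; and a combined convection--drift one, $-\tfrac{1}{\eps}\sum_i\int_{\Om^\eps}c(b_{T,i}-b^\eps_i)T^\eps\,\p_{x_i}\phi^\eps_T\,dx$. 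Cauchy--Schwarz in space followed by Cauchy--Schwarz in time, using the bound on $\|\nabla T^\eps\|_{L^2(S\times\Om^\eps)}$ from Lemma \ref{apriori}, controls the diffusive term by $C_{jk}\sqrt{\delta t}$. The reactive term is handled via the trace scaling $\|\phi^\eps_T\|_{L^2(\Gamma^\eps)}\leq C/\sqrt\eps$ combined with $\|W^\eps\|_{L^\infty(S;L^2(\Gamma^\eps))}\leq C\sqrt\eps$ (which follows from the last estimate in Lemma \ref{apriori} together with $W^\eps=A\eps\W^\eps$); this yields an $O(\delta t)$ contribution that is absorbed into $C_{jk}\sqrt{\delta t}$.

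The main obstacle is the singular $O(\eps^{-1})$ convection--drift term, whose compensation is the very reason for working in a moving frame. To handle it, I would invoke the cell correctors from \eqref{aux2}: since $-\eps^2\Delta\Pi^\eps_i=c^\eps(b_{T,i}-b^\eps_i)$ in $\Om^\eps$ and $\nabla\Pi^\eps_i\cdot n=0$ on $\Gamma^\eps$, substituting this identity and integrating by parts twice (the $\Gamma^\eps$-contributions drop thanks to the Neumann condition and the continuity of the normal flux built into \eqref{aux2}) recasts the singular term as
$$\eps\sum_i\int_{\Om^\eps}\nabla\Pi^\eps_i\cdot\nabla T^\eps\,\p_{x_i}\phi^\eps_T\,dx\;+\;\eps\sum_i\int_{\Om^\eps}T^\eps\,\nabla\Pi^\eps_i\cdot\nabla\p_{x_i}\phi^\eps_T\,dx.$$
Because $\eps\nabla\Pi^\eps_i(x)=(\nabla_y\Pi_i)(x/\eps)$ is uniformly bounded in $L^2$ on the compact support of $\phi^\eps_T$, Cauchy--Schwarz in space (using Lemma \ref{apriori} for $\|T^\eps\|_{L^\infty(S;L^2(\Om^\eps))}$ and $\|\nabla T^\eps\|_{L^2(S\times\Om^\eps)}$) followed by Cauchy--Schwarz in time produces the desired $O(\sqrt{\delta t})$ bound.

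The second estimate \eqref{equiLeEq1} follows by the identical scheme, now testing \eqref{Ceq1} with $\phi^\eps_\C$ and using the cell correctors $\Sigma^\eps_i$ from \eqref{aux2} in place of $\Pi^\eps_i$. The constant $C_{jk}$ depends on $(j,k)$ only through $\|e_{jk}\|_{W^{2,\infty}(\Real^d)}$ and the measure of $\mathrm{supp}\,e_{jk}$, and in particular is independent of $\eps$ and $\delta t$, as required.
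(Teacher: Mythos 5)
Your treatment of the drift--convection cancellation via the correctors $\Pi^\eps_i$, $\Sigma^\eps_i$ and your handling of the diffusive terms match the paper's argument, but your estimate of the surface reaction term is where the proof breaks down. With the scalings you yourself quote, Cauchy--Schwarz gives
$$\dfrac{QA}{\eps}\Big|\int\limits_{\Gamma^\eps}W^\eps\phi^\eps_T\,d\sigma\Big|\;\leq\;\dfrac{QA}{\eps}\,\norm{W^\eps}_{L^2(\Gamma^\eps)}\,\norm{\phi^\eps_T}_{L^2(\Gamma^\eps)}\;\leq\;\dfrac{QA}{\eps}\cdot C\sqrt{\eps}\cdot\dfrac{C}{\sqrt{\eps}}\;=\;\dfrac{C}{\eps},$$
so the powers of $\eps$ do not cancel: after integrating over $(t,t+\delta t)$ you obtain a contribution of order $\delta t/\eps$, not $\delta t$. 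The prefactor $\eps^{-1}$ survives, and there is no further smallness to be extracted from a single equation, because $e_{jk}$ has no mean-zero structure on the cell boundaries; indeed, the singular surface term $\eps^{-1}\int_{\Gamma^\eps}W^\eps\phi\,d\sigma$ is only controlled in the limit for test functions satisfying $\int_{\Gamma}\phi\,d\sigma(y)=0$, which is exactly the restriction imposed in Step~I of the proof of Theorem \ref{MR}.

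This is precisely why the paper does not prove \eqref{equiLeEq0} and \eqref{equiLeEq1} separately: it tests the two weak formulations simultaneously with the pair $(\check{e}_{jk},Q\check{e}'_{jk})$ and adds them, so that the two surface contributions combine into the single term $\frac{QA}{\eps}\int_{\Gamma^\eps}\C^\eps f(T^\eps)\,(\check{e}_{jk}-\check{e}'_{jk})\,d\sigma$. The difference $\check{e}_{jk}-\check{e}'_{jk}$ of the two drifted translates of $e_{jk}$ is then represented as $\eps\,{\rm div}^s_x\Upsilon^\eps$ through the auxiliary tangential-divergence problem \eqref{auxBoundEq1}--\eqref{auxBoundEq2}; this supplies exactly the missing factor of $\eps$, after which the a priori bound $\sqrt{\eps}\,\norm{\W^\eps}_{L^2(\Gamma^\eps)}\leq C$ from Lemma \ref{apriori} closes the estimate. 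The coupling of the two equations (and the weight $Q$ in the test pair) is therefore not a cosmetic choice but the mechanism that kills the $\eps^{-1}$; your decoupled version cannot reproduce it, and consequently yields only the weaker, non-uniform bound $C_{jk}\delta t/\eps$. The remaining ingredients of your argument --- integration by parts in time, the identity $\p_s\phi^\eps_T=-\eps^{-1}b_T\cdot\Na\phi^\eps_T$, the use of \eqref{aux2}, and Cauchy--Schwarz in time --- are the same as in the paper.
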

To pass to the homogenization limit, we will make use of the following compactness theorem for the sequence of functions $(T^\eps, \C^\eps)$ in the moving coordinates.
\begin{theorem}\label{strongcomp}
There exists a subsequence $\eps$ and limit $(T^0,\C^0)\in L^2(S\times \Real^d) \times L^2(S\times \Real^d)$ such that 
\begin{align}
\label{STEq}
&\lim_{\eps\rightarrow 0}\int\limits_S\!\!\int\limits_{\hat{\Om^\eps(t)}}|\hat{T}^\eps(t,x)-T^0(t,x)|^2~dxdt=0,\\
\label{SCEq}
&\lim_{\eps\rightarrow 0}\int\limits_S\!\!\int\limits_{\hat{\Om^\eps_{\rm g}(t)}}|\hat{\C}^\eps(t,x)-\C^0(t,x)|^2~dxdt=0.
\end{align}
\end{theorem}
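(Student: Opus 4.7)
The plan is to combine the four preparatory lemmas in a classical Riesz--Fr\'echet--Kolmogorov style $3\delta$ trichotomy: (i) spatial tightness in the moving frame (Lemma \ref{restEq1}), (ii) finite-mode Fourier truncation (Lemma \ref{truncEq1}), (iii) the bridge between the moving-frame coefficients $\mu^\eps_{\cdot,jk}$ and the fixed-frame coefficients $\nu^\eps_{\cdot,jk}$ of the extensions (Lemma \ref{fourCoeff}), and (iv) the convergence of each individual Fourier coefficient in $L^2(S)$ (Lemma \ref{limitLemma}). I will do the argument in detail for $\hat T^\eps$ and then indicate the minor modification required to handle the volume-fraction factor $\theta$ for $\hat\C^\eps$.

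\textbf{Step 1: Tail control.} Fix $\delta>0$ and pick $R=R(\delta)$ from Lemma \ref{restEq1}. Split
\[
\int_S\!\!\int_{\hat\Om^\eps(t)}|\hat T^\eps - T^0|^2\,dx\,dt
=\int_S\!\!\int_{\hat\Om^\eps(t)\cap Q_R}(\cdots) + \int_S\!\!\int_{\hat\Om^\eps(t)\setminus Q_R}(\cdots).
\]
The tail is bounded by $2\delta^2|S|+2\|T^0\|^2_{L^2(S\times(\Real^d\setminus Q_R))}$; the first summand is Lemma \ref{restEq1} and the second is handled by dominated convergence, using that $T^0\in L^2(S\times\Real^d)$ is produced as a subsequential limit (and hence inherits a matching tail estimate from the $\mu^\eps_{T,jk}\to\mu_{T,jk}$ convergence plus Fatou, enlarging $R$ if needed).

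\textbf{Step 2: Modal truncation on $Q_R$.} The extensions $E^\eps T^\eps$ are bounded in $L^2(S;H^1(\Real^d))$ by \eqref{extEq1} combined with Lemma \ref{apriori}, so Lemma \ref{truncEq1} delivers $N=N(\delta)$ with
\[
\Bigl\|\widehat{E^\eps T^\eps}\chi_{Q_R} - \!\!\sum_{|k|\le R,\,|j|\le N}\!\!\nu^\eps_{T,jk}(t)\,e_{jk}\Bigr\|_{L^2(S\times\Real^d)}\le\delta.
\]
Since $\widehat{E^\eps T^\eps}\!\!\restriction_{\hat\Om^\eps(t)}=\hat T^\eps$, this controls the truncation error of $\hat T^\eps$ on the intersection with $Q_R$ by $\delta$.

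\textbf{Step 3: Coefficient convergence.} By Lemma \ref{fourCoeff}, $|\nu^\eps_{T,jk}-\mu^\eps_{T,jk}|\le C_{jk}\eps$, and by Lemma \ref{limitLemma}, $\mu^\eps_{T,jk}\to\mu_{T,jk}$ in $L^2(S)$. Hence $\nu^\eps_{T,jk}\to\mu_{T,jk}$ in $L^2(S)$ for each $(j,k)$. The finite sum therefore converges:
\[
\sum_{|k|\le R,\,|j|\le N}\!\!\nu^\eps_{T,jk}(t)\,e_{jk}
\;\xrightarrow[\eps\to 0]{L^2(S\times Q_R)}\;
\sum_{|k|\le R,\,|j|\le N}\!\!\mu_{T,jk}(t)\,e_{jk}.
\]
Apply Lemma \ref{truncEq1} once more to $T^0$ (using that its Parseval expansion is $\sum\mu_{T,jk}e_{jk}$) to control the tail error $\|T^0\chi_{Q_R}-\sum_{\text{finite}}\mu_{T,jk}e_{jk}\|_{L^2}\le\delta$ after possibly enlarging $N$ and $R$.

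\textbf{Step 4: Triangle inequality.} Combining Steps 1--3 via the triangle inequality,
\[
\|\hat T^\eps - T^0\|_{L^2(S\times(\hat\Om^\eps(t)\cap Q_R))}\le 3\delta + o_\eps(1),
\]
and adding the tail from Step 1 yields $\limsup_\eps\|\hat T^\eps-T^0\|\le C\delta$. Letting $\delta\to 0$ proves \eqref{STEq}.

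\textbf{Concentration case and main obstacle.} The identical scheme applies to $\hat\C^\eps$ on the gas-filled moving domain $\hat\Om^\eps_{\rm g}(t)$, except that Lemma \ref{fourCoeff} furnishes $|\mu^\eps_{\C,jk}-\theta\,\nu^\eps_{\C,jk}|\le C_{jk}\eps$. The principal technical hurdle is precisely this $\theta$-rescaling in the bridge between moving-frame and extended-frame Fourier coefficients: one must set up $\C^0:=\sum\mu_{\C,jk}e_{jk}$ (so that Lemma \ref{limitLemma} directly yields $\mu^\eps_{\C,jk}\to\mu_{\C,jk}$) and then use $\widehat{E^\eps\C^\eps}\!\!\restriction_{\hat\Om^\eps_{\rm g}(t)}=\hat\C^\eps$ to identify the truncation, absorbing the $\theta$ factor into the comparison of $\nu^\eps_{\C,jk}$ with $\theta^{-1}\mu_{\C,jk}$ on $\hat\Om^\eps_{\rm g}(t)\cap Q_R$ (the characteristic function of the gas phase averaging to $\theta$ on $\Real^d$ but identically equal to $1$ on the moving gas domain). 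The bookkeeping of the passage between integration over $\hat\Om^\eps_{\rm g}(t)$ and over $\Real^d$ is the subtle point; once it is carried out, the same $3\delta$ argument closes the proof of \eqref{SCEq}.
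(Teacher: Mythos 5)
Your proposal follows essentially the same route as the paper's proof: the same five-term telescoping estimate combining the tail bound of Lemma \ref{restEq1}, the modal truncation of Lemma \ref{truncEq1}, the identity $\widehat{E^\eps T^\eps}\big|_{Q_{R(\delta)}}=\hat T^\eps\chi_{Q_{R(\delta)}}$, the coefficient bridge of Lemma \ref{fourCoeff}, and the $L^2(S)$ compactness of the $\mu^\eps_{\cdot,jk}$ from Lemma \ref{limitLemma}. Your explicit attention to the tail of $T^0$ outside $Q_{R(\delta)}$ and to the $\theta$-normalization linking $\nu^\eps_{\C,jk}$ to $\theta^{-1}\mu_{\C,jk}$ (hence to the correct identification of the limit $\C^0$ on the gas domain) is a small but welcome refinement of details the paper leaves implicit.
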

The proof of Theorem \ref{strongcomp} is given in \ref{Proofstrongcomp}.
\section{Passage to the homogenization limit $\eps \to 0$}\label{rigor}
Using the two-scale convergence methods, we aim now to obtain the structure of the macroscopic combustion equations arising from our model $\mathcal{P}^\eps$ as $\eps \to 0.$ To this end, we employ the mathematical tool called the two scale convergence with drift, which is a suitable modification of the two-scale convergence concept (cf. e.g. \cite{Fatima12}) to account for suitable drifts. In this context, we follow the line of arguments from \cite{Harsha,Maro}. Also, we refer the reader to the PhD thesis \cite{HH} as well as to the references cited therein for more details and application examples of this averaging technique.

We recall the definition of the two-scale convergence with drift as stated in \cite[Proposition 3.1]{Harsha}.
\begin{definition}\label{2scale}
 Let $b^*$ be a constant vector in $\mathbb{R}^d$ and $u^\eps(t,x)\in L^2(S\times\mathbb{R}^d)$ be any bounded sequence of functions, i.e. there exists a constant $C>0$, independent of $\eps$, such that
 \begin{align*}
 \| u^\eps \|_{L^2(S\times \Real^d)} \leq C.
 \end{align*}
Then, there exists a function $u^0(t,x,y)\in  L^2(S\times\mathbb{R}^d\times \mathbb{T}^d)$ and up to the extraction of a subsequence \big(still denoted by $\eps$\big), the sequence $u^\eps$ two-scale converge with drift $b^*$ \big(or equivalently in the moving coordinates $(t,x)\to (t,x-b^*t/\eps)$\big) to $u^0$, in the sense that, 
 \begin{align}\label{conv_drift}
 \lim_{\eps\to 0}\int\limits_S\tg\int\limits_{\mathbb{R}^d}u^\eps(t,x)\varphi\Big(t, x-\frac{b^*}{\eps}t,\frac{x}{\eps}\Big)  dxdt=\int\limits_S\tg\int\limits_{\mathbb{R}^d}\tg\int\limits_{\mathbb{T}^d}u^0(t,x,y)\varphi(t,x,y)  dydxdt,
 \end{align}
 for all test functions $\varphi(t,x,y)\in \C_0^\infty(S\times\mathbb{R}^d\times \mathbb{T}^d)$. 
 \end{definition}
 We denote the convergence \eqref{conv_drift} by $u^\eps \twoscale  u^0$.  We refer to the constant vector $b^*$ as the effective drift. At this moment its choice is arbitrary. In $(ii)$, we will see that in our context the effective drift $b^*$ will be played by the vectors $b_T$ and $b_\C$ given by (\ref{drift1}) and (\ref{drift2}), respectively, depending whether we point out the temperature evolution or the evolution of the mass concentration.  
\begin{remark}
We point out that Definition \ref{2scale} is also applicable to a sequence of functions $u^\eps(t,x)\in L^2(S\times \Om^\eps)$ defined in a perforated domain $\Om^\eps$ satisfying the uniform bound $$\|u^\eps\|_{L^2(S\times \Om^\eps)} \leq C$$
such that
\begin{align*}
\lim_{\eps\to 0}\int\limits_S\tg\int\limits_{\Om^\eps}u^\eps(t,x)\psi(t,x-\dfrac{b^*}{\eps}t,\dfrac{x}{\eps})  dxdt = \int\limits_S\tg\int\limits_{\Real^d}\tg\int\limits_{Y}u^0(t,x,y)\psi(t,x,y)  dydxdt.
\end{align*}
\end{remark}
 Definition \ref{2scale} can be extended to sequences defined on periodic surfaces $\Gamma^\eps.$ In what follows, we give a statement of the two-scale convergence with drift on periodic surfaces due to \cite{Mik}.
 \begin{definition}\label{2scales}
 Let $b^*\in \Real^d$ and $w^\eps$ be a sequence in $L^2(S\times \Gamma^\eps)$ such that
 \begin{align*}
 \eps \int\limits_S\tg\int\limits_{\Gamma^\eps}|w^\eps(t,x)|^2 d\sigma(x)dt \leq C.
 \end{align*}
 Then, there exists a subsequence, still denoted by $\eps$, and a function $w^0(t,x,y)\in L^2(S\times \Real^d\times \Gamma)$ such that $w^\eps$ two-scale converge with drift $b^*$ to $w^0(t,x,y)$ in the sense that
 \begin{align*}
\lim_{\eps\to 0}&\eps  \int\limits_S\tg\int\limits_{\Gamma^\eps}w^\eps(t,x)\psi(t,x-\dfrac{b^*t}{\eps},\dfrac{x}{\eps}) d\sigma(x) dt\\
&= \int\limits_S\tg\int\limits_{\Real^d}\tg\int\limits_{\Gamma}w^\eps(t,x)\psi(t,x-\dfrac{b^*t}{\eps},\dfrac{x}{\eps}) dyd\sigma(y) dt
 \end{align*}
 for all $\psi(t,x,y)\in C^\infty_0(S\times \Real^d;C^\infty_\#(Y))$. 
 \end{definition}
 We denote the surface two-scale convergence with drift by $w^\eps \twoscales w^0.$
\begin{theorem}\label{dr}
Let ${b}^*$ be a constant vector in $\mathbb{R}^d$ and $u^\eps$ a sequence of functions uniformly bounded in $L^2(S;H^1(\mathbb{R}^d))$. Then there exist a subsequence, still denoted by $\eps$, and functions $u^0(t,x)\in L^2(S;H^1(\mathbb{R}^d))$ and $u^1(t,x,y)\in L^2(S\times\mathbb{R}^d;H^1_\#(Y))$ such that
  \begin{equation}
  u^\eps \twoscale u^0
  \end{equation}
  and 
  \begin{equation}
  \Na u^\eps \twoscale \Na_x u^0+\Na_y u^1.
  \end{equation}
 \end{theorem}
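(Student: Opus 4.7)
\textbf{Proof plan for Theorem \ref{dr}.} The strategy is to mimic the classical Nguetseng–Allaire two-scale compactness argument, but systematically replacing fixed-frame test functions $\varphi(t,x,x/\eps)$ by drifted ones $\varphi(t,x-b^*t/\eps,x/\eps)$; equivalently, one works in the moving coordinates introduced in Section \ref{micro-pb}. Since $u^\eps$ and $\nabla u^\eps$ are bounded in $L^2(S\times\Real^d)$, the compactness part of Definition \ref{2scale} (in the form already invoked in Proposition 3.1 of \cite{Harsha}) gives, along a common subsequence, $u^0\in L^2(S\times\Real^d\times\mathbb{T}^d)$ and a vector field $\xi^0\in L^2(S\times\Real^d\times\mathbb{T}^d;\Real^d)$ with
\begin{equation*}
u^\eps \twoscale u^0(t,x,y),\qquad \nabla u^\eps \twoscale \xi^0(t,x,y).
\end{equation*}

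The next step is to show that $u^0$ is independent of $y$. Pick a $Y$-periodic vector-valued test function $\Phi(t,x,y)\in C_0^\infty(S\times\Real^d;C_\#^\infty(Y)^d)$ and test $\nabla u^\eps$ against $\eps\,\Phi(t,x-b^*t/\eps,x/\eps)$. Integrating by parts in $x$, the chain rule produces one $\operatorname{div}_x\Phi$-term of order $\eps$ and one $\operatorname{div}_y\Phi$-term of order $1$; the boundedness of $\nabla u^\eps$ and the factor $\eps$ on the left kill the LHS, and passing to the two-scale limit with drift in the remaining surviving term yields
\begin{equation*}
\int_S\!\int_{\Real^d}\!\int_Y u^0(t,x,y)\,\operatorname{div}_y\Phi(t,x,y)\,dy\,dx\,dt=0.
\end{equation*}
Since $\Phi$ is arbitrary, $\nabla_y u^0 = 0$ in the sense of distributions on $\mathbb{T}^d$, so $u^0=u^0(t,x)$.

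To identify $\xi^0$, one now restricts to $Y$-periodic test fields $\Phi(t,x,y)$ that are solenoidal in $y$, i.e.\ $\operatorname{div}_y\Phi(t,x,\cdot)=0$. Testing $\nabla u^\eps$ against $\Phi(t,x-b^*t/\eps,x/\eps)$ and integrating by parts in $x$, the $\eps^{-1}\operatorname{div}_y\Phi$ contribution vanishes identically, and the two-scale passage to the limit gives
\begin{equation*}
\int_S\!\int_{\Real^d}\!\int_Y \xi^0\cdot\Phi\,dy\,dx\,dt \;=\; -\int_S\!\int_{\Real^d}\!\int_Y u^0(t,x)\,\operatorname{div}_x\Phi(t,x,y)\,dy\,dx\,dt.
\end{equation*}
Choosing first $y$-independent $\Phi$ identifies the $y$-mean $\int_Y\xi^0\,dy$ with $\nabla_x u^0$ in the distributional sense, thereby proving $u^0\in L^2(S;H^1(\Real^d))$. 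Choosing then arbitrary $y$-divergence-free $\Phi$ shows that $\xi^0-\nabla_x u^0$ is $L^2$-orthogonal to the space of $y$-divergence-free periodic vector fields.

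The final step is the standard orthogonal-decomposition argument on the torus: the $L^2(\mathbb{T}^d)$-orthogonal complement of solenoidal periodic fields is precisely the $L^2$-closure of $\{\nabla_y v : v\in H^1_\#(Y)/\Real\}$. Hence there exists $u^1\in L^2(S\times\Real^d;H^1_\#(Y)/\Real)$ with $\xi^0 = \nabla_x u^0 + \nabla_y u^1$, which is the claimed formula for the two-scale limit of the gradient. The main technical point, and the one I would check most carefully, is the divergence-free orthogonality argument and the measurability in $(t,x)$ of the resulting corrector $u^1$; this is handled by choosing tensorized test functions $\Phi(t,x,y)=\psi(t,x)\,\Theta(y)$ and invoking a measurable selection / Bochner-space version of the Helmholtz decomposition on $\mathbb{T}^d$, exactly as in \cite{Harsha,Maro}.
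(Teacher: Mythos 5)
Your argument is correct and is precisely the standard Nguetseng--Allaire two-scale compactness proof transplanted to drifted test functions; the paper itself offers no proof of Theorem \ref{dr} but simply defers to \cite{Maro}, where this same sequence of steps (extraction of two-scale limits with drift for $u^\eps$ and $\Na u^\eps$, $y$-independence of $u^0$ via testing $\Na u^\eps$ against $\eps\,\Phi$, identification of the gradient limit via $y$-solenoidal fields, and the Helmholtz decomposition on the torus to produce $u^1$) is carried out. Nothing further is needed.
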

 \begin{proof}
 We refer the reader to \cite{Maro} for the details of the proof and related context. 
 \end{proof}
 \subsection{Derivation of the macroscopic combustion equations}\label{main-result}
 The main result in this paper is the following strong convergence result:
 \begin{theorem}\label{MR} Assume $(H1)$--$(H6)$. The sequence $(T^\eps,\C^\eps)$ of solutions to problem $\mathcal{P}^\eps$ satisfies
 \begin{align}\label{bT}
 T^\eps(t,x) &= T^0(t,x-\frac{b_T}{\eps}t)+\rho_T^\eps(t,x),\\
 \C^\eps(t,x)&= \C^0(t,x-\frac{b_\C}{\eps}t)+\rho_\C^\eps(t,x),\label{bC}
 \end{align}
 with
 $$\lim_{\eps\to 0}\int\limits_S\int\limits_{\Om^\eps} |\rho_T^\eps(t,x)|^2dtdx=\lim_{\eps\to 0}\int\limits_S\int\limits_{\Om^\eps_{\rm g}} |\rho_\C^\eps(t,x)|^2dtdx=0.$$
In \eqref{bT} and \eqref{bC}, the vectors $b_T$ and $b_\C$ are the corresponding effective drifts given by 
\begin{align}
\label{drift1}
b_T := \dfrac{c_{\rm g}}{c^{\rm eff}}\int\limits_{Y_{\rm g}}b(y)dy
\end{align}
and
\begin{align}\label{drift2}
b_\C := \dfrac{1}{|Y_{\rm g}|}\int\limits_{Y_{\rm g}}b(y)dy,
\end{align}
while the pair $(T^0,\C^0)$ is the unique weak solution to the homogenized system
\begin{align}\label{homo}
\begin{dcases}
c^{\rm eff}\dfrac{\p T^0}{\p t}=\Na_x\cdot\!\big(\La^{\rm eff}\Na_xT^0\big) &\quad\mbox{in $S\times \Real^d$}\\
|Y_{\rm g}|\dfrac{\p \C^0}{\p t} = \Na_x\!\cdot\!\big(\D^{\rm eff}\Na_x\C^0\big) &\quad\mbox{in $S\times \Real^d$}\\
T_0(0,x)=T_{0}(x),~|Y_{\rm g}|\C_0(0,x)=|Y_{\rm g}|\C_{0}(x) &\quad\mbox{in $\Real^d$}.
\end{dcases}
\end{align}
Furthermore, the effective heat capacity is 
\begin{align}\label{effC}
c^{\rm eff}:=\int_{Y_{\rm g}}c_{\rm g} dy + \int_{Y_{\rm s}}c_{\rm s}dy,
\end{align}
while the entries of the dispersion tensors $\La^{\rm eff}$ and $\D^{\rm eff}$ are given respectively by
\begin{align}\label{Latensor}
\La^{\rm eff}_{ij}(T^0,\C^0)&:=\int\limits_{Y_{\rm g}}\La_{\rm g} \big(e_i + \Na_y\chi_{\textrm g,i}\big)\!\cdot\!\big(e_j + \Na_y\chi_{\textrm g,j}\big)dy\\\nonumber
&+\int\limits_{Y_{\rm s}}\La_{\rm s} \big(e_i + \Na_y\chi_{\textrm s,i}\big)\!\cdot\!\big(e_j + \Na_y\chi_{\textrm s,j}\big)dy\\\nonumber
&+QA\int\limits_{\Gamma}\Big[f'(T^0)\C^0\chi_i\chi_j + f(T^0)\omega_i\chi_j\Big]d\sigma(y),\\\nonumber
&\mbox{$i,j=1,\ldots,d,$}
\end{align}
and
\begin{align}\label{Dtensor}
\D^{\rm eff}_{ij}(T^0,\C^0)&:=\int\limits_{Y_{\rm g}}D(y)\big(e_i + \Na_y\omega_{i}\big)\!\cdot\!\big(e_j + \Na_y\omega_{j}\big)dy\\\nonumber
&-A\int\limits_{\Gamma}\Big[f(T^0)\omega_i\omega_j + \C^0f'(T^0)\omega_i\chi_j\Big]d\sigma(y),\\\nonumber
&\mbox{$i,j=1,\ldots,d$}.
\end{align}
Here \mbox{$(\chi,\omega) = \big(\chi_j,\omega_j\big)_{j=1,\ldots,d}$} is the solution of the cell problem 
\begin{align}\label{cellproblem}
\begin{dcases}
c_{\rm g} b(y)\!\cdot\!\Na_y\chi_{\textrm{g},j} - \Na_y\!\cdot\!(\La_{\rm g} (\Na_y\chi_{\textrm g,j} + e_j) ) \\
= c_{\rm g} (b_T-b(y))\!\cdot\!e_j,&\mbox{in $Y_{\rm g},$}\\
-\Na_y\!\cdot\!(\La_{\rm s} (\Na_y\chi_{\textrm s,j} + e_j)) =c_{\rm s}b_T\!\cdot\!e_j,&\mbox{in $Y_{\rm s},$}\\
\chi_{\textrm g,j}-\chi_{\textrm s,i}=0,&\mbox{on $\Gamma,$}\\
\big[\La_{\rm s} (\Na_y\chi_{\textrm s,j}+e_j)-\La_{\rm g} (\Na_y\chi_{\textrm g,j} + e_j)\big]\!\cdot\! n\\
=QA\Big[f'(T^0)\C^0\chi_j + f(T^0)\omega_j\Big],&\mbox{on $\Gamma,$}\\
b(y)\!\cdot\!\Na_y\omega_{j} - \Na_y\!\cdot\!(D(y)(\Na_y\omega_{j} + e_j) ) = (b_\C-b(y))\!\cdot\!e_j,&\mbox{in $Y_{\rm g},$}\\
D(y)(\Na_y\omega_{j} + e_j)\!\cdot\! n=-A\Big[f'(T^0)\C^0\chi_j +f(T^0)\omega_j\Big],&\mbox{on $\Gamma,$}\\
y \rightarrow (\chi(y),\omega(y)) \mbox{  is $Y$-periodic.}
\end{dcases}
\end{align}
with the first-order terms written as
\begin{align}\label{first1}
T^{1}(t,x,y) &= \chi(y,T^0,\C^0)\cdot\Na_x T^0(t,x),\\ 
\label{first2}
\C^{1}(t,x,y) &= \omega(y,T^0,\C^0)\cdot\Na_x \C^0(t,x).
\end{align}
\end{theorem} 
 We prove this result in the remainder of the paper, but before doing so we establish the two-scale compactness for the sequences introduced in Theorem \ref{MR}.
\begin{theorem} 
Let $(T^\eps,\C^\eps)$  be a solution to $\mathcal{P}^\eps$ in the sense of Definition \ref{X1}. As $b^*$ for $T^\eps$ and $\C^\eps$, we assume the constant vectors ${b}_T$ and ${b}_\C$ defined in \eqref{drift1} and \eqref{drift2} respectively. Then, there exist subsequences $(T^\eps,\C^\eps)$, still denoted by $\eps$, and the pairs 
\begin{align}\label{energy}
&\mbox{$(T^0,T^1)$}  \in L^2(S;H^1(\mathbb{R}^d))\times L^2(S\times \mathbb{R}^d;H^1_\#(Y)),\\\nonumber
&\mbox{$(\C^0,\C^1)$}\in L^2(S;H^1(\mathbb{R}^d))\times L^2(S\times \mathbb{R}^d;H^1_\#(Y_{\rm g})),\nonumber
\end{align}
such that
\begin{align}\label{2scaleEqs}
\begin{dcases}
& T^\eps \twoscale  T^0(t,x),\\
& \C^\eps \twoscale \C^0(t,x),\\
& \Na T^\eps \twoscale \Na_x T^0(t,x)+\Na_y T^1(t,x,y),\\
& \Na \C^\eps\twoscale \Na_x \C^0(t,x)+\Na_y \C^1(t,x,y),\\
& \dfrac{1}{\eps}\C^\eps f(T^\eps) \twoscales \C^0f'(T^0)T^1(t,x,y) + f(T^0)\C^1(t,x,y).
\end{dcases}
\end{align}
\end{theorem}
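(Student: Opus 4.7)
My plan is to combine the a priori bounds from Lemma \ref{apriori}, the strong moving-frame compactness from Theorem \ref{strongcomp}, and the abstract two-scale compactness tools (Theorem \ref{dr} and Definitions \ref{2scale}, \ref{2scales}), handling the bulk convergences first and the nonlinear surface convergence afterward.

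First I would apply the extension operators $E^\eps$ from \eqref{extEq1} so that, via Lemma \ref{apriori}, the extended sequences $E^\eps T^\eps$ and $E^\eps \C^\eps$ are uniformly bounded in $L^2(S; H^1(\Real^d))$. Applying Theorem \ref{dr} with drift $b_T$ to the temperature sequence and with drift $b_\C$ to the concentration sequence produces limits $(T^0, T^1) \in L^2(S; H^1(\Real^d)) \times L^2(S\times \Real^d; H^1_\#(Y))$ and analogously $(\C^0, \C^1)$ satisfying the first four convergences in \eqref{2scaleEqs}, with $T^0$ and $\C^0$ automatically independent of $y$ by the structure of Theorem \ref{dr}. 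The strong convergences from Theorem \ref{strongcomp} identify the limits $T^0$ and $\C^0$ uniquely along the extracted subsequence.

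For the surface convergence of $\W^\eps = \eps^{-1}\C^\eps f(T^\eps)$, the a priori bound $\sqrt{\eps}\,\|\W^\eps\|_{L^\infty(S; L^2(\Gamma^\eps))} \leq C$ from Lemma \ref{apriori} fits exactly the hypothesis of Definition \ref{2scales}, yielding $\W^\eps \twoscales w^0$ for some $w^0 \in L^2(S\times \Real^d \times \Gamma)$ up to a subsequence. The identification $w^0 = f(T^0)\C^1 + \C^0 f'(T^0) T^1$ is the main obstacle. My approach is to first observe that the same bound forces the leading macroscopic product $\C^0 f(T^0)$ to vanish on the surface at the leading order: for any smooth $\eps$-independent function $g$ one has $\|g\|_{L^2(\Gamma^\eps)} \sim \eps^{-1/2}\|g\|_{L^2(\Real^d)}$, so the $\sqrt{\eps}$ bound is only compatible with $\C^0 f(T^0) = 0$ in the limit. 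I would then invoke the $C^\infty$-regularity of $f$ (assumption $(H4)$) and perform a first-order Taylor expansion
\begin{align*}
\C^\eps f(T^\eps) = [\C^\eps - \C^0]f(T^\eps) + \C^0[f(T^\eps) - f(T^0)] + \C^0 f(T^0),
\end{align*}
using the two-scale identifications $\C^\eps - \C^0 \sim \eps\,\C^1$ and $T^\eps - T^0 \sim \eps\,T^1$ in the moving coordinates to extract the surface limit.

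The hard part is making this expansion rigorous in the surface two-scale sense. This requires (i) trace estimates adapted to the periodic surfaces $\Gamma^\eps$, analogous to those leveraged in the proof of Lemma \ref{equiLemma} via the tangential cell problem \eqref{auxBoundEq1}; (ii) uniform control of the Taylor remainder, using the smoothness and monotonicity of $f$ together with suitable $L^\infty$ bounds on $T^\eps$ and $\C^\eps$ obtained by testing the equations with powers and invoking the exponentiation property \eqref{mono}; and (iii) a delicate reconciliation of the two different drifts $b_T$ and $b_\C$ in the nonlinear product, exploiting the non-penetration condition $b\cdot n|_\Gamma = 0$ from $(H2)$. The proof concludes by testing against smooth functions $\psi(t,x,y)$ on $S\times \Real^d \times \Gamma$ and passing to the limit term by term using the bulk two-scale convergences of $T^\eps, \C^\eps$ and their gradients already established in the earlier stages.
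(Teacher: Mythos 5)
Your treatment of the first four convergences is essentially the paper's: uniform $L^2(S;H^1)$ bounds via Lemma \ref{apriori} and the extension operators \eqref{extEq1}, then Theorem \ref{dr} applied with drift $b_T$ for the temperature and $b_\C$ for the concentration, with Theorem \ref{strongcomp} identifying $T^0$ and $\C^0$. The gap is in the fifth convergence. Your central device there is the pointwise expansion $\C^\eps-\C^0\sim\eps\,\C^1$ and $T^\eps-T^0\sim\eps\,T^1$ inserted into a Taylor expansion of $f$ on $\Gamma^\eps$. Two-scale convergence with drift of $\Na T^\eps$ and $\Na\C^\eps$ does \emph{not} yield such first-order corrector identities in any norm, let alone in a trace sense on the oscillating surfaces $\Gamma^\eps$; establishing them would require corrector estimates that are proved nowhere in the paper and are generally unavailable at this level of regularity. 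So the step on which your whole identification rests is exactly the step you cannot justify with the tools at hand, and the three ``hard parts'' you list do not supply a mechanism for it.

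The paper's mechanism is different and avoids correctors entirely: for a surface test function $\phi$ with $\int_\Gamma\phi\,d\sigma(y)=0$ one constructs a $Y$-periodic vector field $\Theta$ with ${\rm div}_y\Theta=0$ in $Y$ and $\Theta\cdot n=\phi$ on $\Gamma$, rewrites $\int_S\!\int_{\Gamma^\eps}W^\eps\phi\,d\sigma$ as a volume integral of ${\rm div}\big(W^\eps\Theta\big)$, and expands by the product rule so that $\Na T^\eps$ and $\Na\C^\eps$ appear explicitly. One then passes to the limit using the already established two-scale limits of the gradients (which is where $\Na_yT^1$ and $\Na_y\C^1$ enter) together with the strong convergence of Theorem \ref{strongcomp} and the boundedness of $f$, $f'$ to handle the coefficients $\C^\eps f'(T^\eps)$ and $f(T^\eps)$; undoing the integration by parts in $y$ (using ${\rm div}_y\Theta=0$) returns the surface expression $\C^0f'(T^0)T^1+f(T^0)\C^1$. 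Note also that your preliminary deduction $\C^0f(T^0)=0$ is a red flag rather than a help: since $f>0$ it would force $\C^0\equiv 0$ and trivialize the claimed limit. The paper never needs this; by restricting to mean-zero $\phi$ the zeroth-order contribution $\C^0f(T^0)$ drops out of the surface pairing (equivalently, it cancels after integration by parts in $x$ in the lifted volume formulation), so no conclusion about $\C^0f(T^0)$ is drawn.
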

\begin{proof}
The statement is a straightforward Corollary of Theorem \ref{dr} given the $\eps$-independent {\em a priori} estimates stated in Lemma \ref{apriori} and Definitions \ref{2scale} and \ref{2scales}. Therefore, the sequences stated in \eqref{2scaleEqs} have two-scale limits with drift. The identification of the limits is rather standard. As guideline, see e.g. \cite{Fatima12}. However, we focus on showing how to derive the limit of the less obvious nonlinear limit in \eqref{2scaleEqs}, which is part of a sequence of steps towards establishing the homogenization results stated in Theorem \ref{MR}.
\section{Proof of Theorem \ref{MR}}\label{proofMR}
 We split the proof into four steps as given in the sequel.
\paragraph{$\rm I.$ Identification of the fast reaction limit}  In this step, we wish to handle the fast reaction term, which is potentially exploding when passing to the homogenization limit. In order to overcome this problem, inspired by a similar situation treated in \cite{Harsha}, we proceed as follows:
We take $\phi\in L^2(S\times \mathbb{R}^d\times \Gamma)$ such that 
$$\int\limits_{\Gamma}\phi (t,x,y)  d\sigma(y)=0,$$
for a.e. $(t,x)\in S\times \mathbb{R}^d$. Then, there exists a periodic vector field $\Theta\in L^2(S\times \mathbb{R}^d\times Y)^d$ such that
\begin{align}\label{fastaux}
\begin{dcases}
{\rm div}_y\Theta=0\quad&\mbox{in $Y,$}\\
\Theta\cdot n=\phi\quad&\mbox{on $\Gamma.$}
\end{dcases}
\end{align}
With \eqref{fastaux} at hand, we assume $\C^\eps$ is extended by zero to the whole of $\Om^\eps$ so that
$$
\eps\int\limits_S\tg\int\limits_{\Gamma^\eps} \dfrac{1}{\eps} W^\eps (\C^\eps,T^\eps)\phi\big(t,x-\dfrac{ b_T}{\eps}t,\dfrac{x}{\eps}\big)  d\sigma \\
=\int\limits_S\tg\int\limits_{\Omega^\eps} {\rm div}\Big(W^\eps (\C^\eps,T^\eps)\Theta\big(t,x-\dfrac{ b_T}{\eps}t,\dfrac{x}{\eps}\big)\Big)  dxdt
$$
$$=A\int\limits_S\tg\int\limits_{\Omega^\eps}\Big(\C^\eps f'(T^\eps)\Na T^\eps+\Na \C^\eps f(T^\eps)\Big)\cdot \Theta\big(t,x-\dfrac{ b_T}{\eps}t,\dfrac{x}{\eps}\big)  dxdt
$$
$$
+\int\limits_S\tg\int\limits_{\Omega^\eps} W^\eps (\C^\eps,T^\eps)~{\rm div}_x\Theta\big(t,x-\dfrac{ b_T}{\eps}t,\dfrac{x}{\eps}\big)  dxdt,
$$
which converges as $\eps\to 0$ to
$$
=A\int\limits_S\tg\int\limits_{\mathbb{R}^d}\tg\int\limits_Y\big(\C^0f'(T^0)(\Na_x T^0+\Na_y T^1)+(\Na_x \C^0+\Na_y \C^1)f(T^0))\cdot \Theta \big)  dydxdt
$$
$$
+A\int\limits_S\tg\int\limits_{\mathbb{R}^d}\tg\int\limits_{Y} \C^0f(T^0)~{\rm div}_x\Theta~dydxdt 
$$

$$
=A\int\limits_S\tg\int\limits_{\mathbb{R}^d}\tg\int\limits_{\Gamma} \big(\C^0f'(T^0)T^1 + f(T^0)\C^1\big)\Theta\cdot n~d\sigma(y)dxdt
$$

$$
=A\int\limits_S\tg\int\limits_{\mathbb{R}^d}\tg\int\limits_{\Gamma} \big(\C^0f'(T^0)T^1 + f(T^0)\C^1\big)\phi~d\sigma(y)dxdt
$$
The convergence result relies on the essential fact that the sequence $\C^\eps f(T^\eps)$ has a two-scale limit, i.e. we used the strong compactness result established in Theorem \ref{strongcomp} and the fact that $f$ has at most a linear growth in $T$, i.e., \mbox{$f(T)\leq T$}, to deduce that $f(T^\eps)$ two-scale converges with drift to the limit $f(T^0)$, and hence to the two-scale with drift limit $\C^0 f(T^0)$ of the sequence $\C^\eps f(T^\eps)$.
\end{proof}
\paragraph{$\rm II.$ Choice of the drifts} Now, we select the constant drift vector $b^*$ such that the cell problems are weakly solvable. 
\begin{lemma}\label{comp}
Assume $(H1)$--$(H6)$. Let $T^0(t,x)\geq 0$, $\C^0(t,x)\geq 0$ be given. Let the effective drifts $b_T$ and $b_\C$ respectively be defined as in \eqref{drift1} and \eqref{drift2} such that there exists a unique solution 
$$(\chi,\omega) = \big(\chi_j,\omega_j\big)_{j=1,\ldots,d}\in [H^1_\#(Y)]^d\times  [H^1_\#(Y_{\rm g})]^d,$$ 
up to the addition of a constant vector $(C, C)$ with $C\in \Real$, of the cell problem \eqref{cellproblem}.
\end{lemma}
\begin{proof}
One verifies directly that the corresponding compatibility condition is satisfied, i.e. by taking the average of the right hand side of \eqref{cellproblem} and equating the resulting expression to zero. This way, we deduce the structure of the effective drift velocities given in \eqref{drift1} and \eqref{drift2}. Furthermore, the combined variational formulation of \eqref{cellproblem} is 
\begin{align}\label{varfCell}
&\int\limits_{Y_{\rm g} }c_{\rm g} b(y)\cdot\Na_{y}\chi_{\textrm{g},j}\phi dy + \int\limits_{Y_{\rm g} }\La_{\rm g}\big(e_{j} + \Na_{y}\chi_{\textrm{g},j}\big)\cdot\Na_{y}\phi dy + \int\limits_{Y_{\rm s} }\La_{\rm s}\big(e_{j} + \Na_{y}\chi_{\textrm{s},j}\big)\cdot\Na_{y}\phi dy\\\nonumber
&+Q\int\limits_{Y_{\rm g} } b(y)\cdot\Na_{y}\omega_{j}\psi dy + Q\int\limits_{Y_{\rm g}}D\big(e_{j} + \Na_{y}\omega_{j}\big)\cdot\Na_{y}\psi dy \\\nonumber
&+ QA\int\limits_{\Gamma}\big[f'(T^0)\C^0\chi_{j} + f(T^0)\omega_{j}\big](\psi-\phi) d\sigma(y)\\\nonumber
&=\int\limits_{Y_{\rm g}}c_{\rm g}\big( b_{T}- b(y)\big)\cdot e_{j}\phi dy + \int\limits_{Y_{\rm s}}c_{\rm s} b_{T}\cdot e_j\phi dy+ \int\limits_{Y_{\rm g}}\big( b_{\C}- b(y)\big)\cdot e_{j}\psi dy,
\end{align}
which can be shown to satisfy the assumptions of the Lax-Milgram Lemma. 
\end{proof}
\paragraph{$\rm III.$ Derivation of the homogenized equations and effective coefficients} Take as test functions
$$(\hat\phi(t,x),\hat\psi(t,x))=\Big(\phi\Big(t,x-\dfrac{ b_T}{\eps}t\Big), \psi\Big(t,x-\dfrac{b_\C}{\eps}t\Big)\Big),$$
with $\phi,\psi\in C_0^\infty\big(\mathbb{R}^d\times S \big)$ with $\phi(t_f,x)=\psi(t_f,x)=0$. Also, observe that 
$$\dfrac{\p\phi}{\p t}(t,x)=\dfrac{\p\hat\phi}{\p t}(t,x) - \dfrac{ b_T}{\eps}\cdot \Na_x \hat\phi(t,x),$$
and correspondingly, 
$$\dfrac{\p\psi}{\p t}(t,x)=\dfrac{\p \hat\psi}{\p t}(t,x) - \dfrac{b_\C}{\eps}\cdot \Na_x \hat \psi(t,x).$$
Hence, using $(\hat \phi,Q\hat \psi)$ as a test function in the combined variational formulation of \eqref{eqmicro} yields:
\begin{align*}
&\int\limits_{S}\tg\int\limits_{\Om^\eps}c^\eps\dfrac{\p T^\eps}{\p t}\hat\phi~dxdt +
\dfrac{1}{\eps}\int\limits_S\tg\int\limits_{\Omega^\eps}c_{\rm g} b^\eps\cdot \Na T^\eps\hat\phi~dxdt
+\int\limits_{S}\tg\int\limits_{\Om^\eps}\La^\eps\Na T^\eps\cdot\Na\hat\phi~dxdt\\
&+Q \int\limits_S\tg\int\limits_{\Omega^\eps_{\rm g}}\dfrac{\p \C^\eps}{\p t}\hat\psi`dxdt
+\dfrac{Q}{\eps}\int\limits_S\tg\int\limits_{\Omega^\eps_{\rm g}}b^\eps\cdot \Na \C^\eps\hat\psi~dxdt+
Q\int\limits_S\tg\int\limits_{\Omega^\eps_{\rm g}}D^\eps\Na \C^\eps\cdot\Na\hat\psi~dxdt\\
&+\dfrac{Q}{\eps}\int\limits_S\tg\int\limits_{\Gamma^\eps} W^\eps (\hat{\psi}-\hat{\phi})~d\sigma^\eps(x)dt=0.
\end{align*}
Integrating by parts, with respect to time, in the last two identities, leads to
\begin{align*}
&-\int\limits_{\Om^\eps}c^\eps T^\eps(0,x)\hat\phi(0,x)~dx+\dfrac{1}{\eps}\int\limits_S\tg\int\limits_{\Omega^\eps_{\rm g}}c_{\rm g} (b_T- b^\eps)T^\eps\Na_x\hat\phi~dxdt -
\int\limits_S\tg\int\limits_{\Omega^\eps}c^\eps T^\eps\dfrac{\p\hat\phi}{\p t}~dxdt\\
&+\int\limits_S\tg\int\limits_{\Omega^\eps}\La^\eps\Na T^\eps\cdot\Na\hat\phi~dxdt
-Q\int\limits_{\Om^\eps_{\rm g}}\C^\eps(0,x)\hat\psi(0,x)~dx+\dfrac{Q}{\eps}\int\limits_S\tg\int\limits_{\Omega^\eps_{\rm g}} (b_\C- b^\eps)\C^\eps\Na_x\hat\psi~dxdt\\
&-Q\int\limits_S\tg\int\limits_{\Omega^\eps_{\rm g}}\C^\eps\dfrac{\p\hat\psi}{\p t}~dxdt
+Q\int\limits_S\tg\int\limits_{\Omega^\eps_{\rm g}}D^\eps\Na\C^\eps\cdot\Na\hat\psi~dxdt
+\dfrac{Q}{\eps}\int\limits_S\tg\int\limits_{\Gamma^\eps} W^{\eps}(\hat\psi-\hat\phi)~d\sigma^\eps(x)dt=0,
\end{align*}
It is worth pointing out that the terms 
$$
\dfrac{1}{\eps}\int\limits_S\tg\int\limits_{\Omega^\eps}c_{\rm g} (b_T-b^\eps)T^\eps\Na_x\hat\phi~dxdt~~\mbox{and}~~ \dfrac{Q}{\eps}\int\limits_S\tg\int\limits_{\Omega^\eps_{\rm g}}(b_\C-b^\eps)\C^\eps\Na_x\hat\psi~dxdt
$$ 
are potentially blowing up. Therefore, they need a special treatment. 
To handle them, we use two auxiliary classes of vector fields $\Pi$ and $\Sigma$, which we introduced earlier in \eqref{aux1} and \eqref{aux2}.
Thus, we obtain
\begin{align}
&-\int\limits_{\Om^\eps}c^\eps T^\eps(0,x)\hat\phi(0,x)~dx+
\eps\int\limits_S\tg\int\limits_{\Omega^\eps }\Delta \Pi_i^\eps \cdot\Na_x\hat\phi T^\eps  dxdt-
\int\limits_S\tg\int\limits_{\Omega^\eps}c^\eps T^\eps\dfrac{\p\hat\phi}{\p t}~dxdt\\\nonumber
&\int\limits_S\tg\int\limits_{\Omega^\eps}\La^\eps\Na T^\eps\cdot\Na\hat\phi~dxdt
-Q\int\limits_{\Om^\eps_{\rm g}}\C^\eps(0,x)\hat\psi(0,x)~dx +\eps Q\int\limits_S\tg\int\limits_{\Omega_{\rm g} ^\eps }\Delta \Sigma_i^\eps \cdot\Na_x\hat\psi \C^\eps  dxdt\\\nonumber
&-Q\int\limits_S\tg\int\limits_{\Omega^\eps_{\rm g}}\C^\eps\dfrac{\p\hat\psi}{\p t}~dxdt
+Q\int\limits_S\tg\int\limits_{\Omega^\eps_{\rm g}}D^\eps\Na\C^\eps\cdot\Na\hat\psi~dxdt
+\dfrac{Q}{\eps}\int\limits_S\tg\int\limits_{\Gamma^\eps} W^\eps (\hat\psi-\hat\phi)~d\sigma^\eps(x)dt=0,
\end{align}
which gives, after partial integration, the following expression: 
\begin{align}
\label{F1}
&-\int\limits_{\Om^\eps}c^\eps T^\eps(0,x)\hat\phi(0,x)~dx-\int\limits_S\tg\int\limits_{\Omega^\eps}c^\eps T^\eps\dfrac{\p\hat\phi}{\p t}~dxdt+\int\limits_S\tg\int\limits_{\Omega^\eps}\La^\eps\Na T^\eps\cdot\Na\hat\phi~dxdt\\\nonumber
&-\eps\int\limits_S\tg\int\limits_{\Omega^\eps } \sum_{i=1}^d\Na \Pi_i^\eps(x) \cdot\Na\Big(T^\eps  \partial_{x_i}\hat\phi \Big)~dxdt
+\eps Q\int\limits_S\tg\int\limits_{\Omega_{\rm g} ^\eps } \sum_{i=1}^d\Na\Sigma_i^\eps(x) \cdot\Na\Big(\C^\eps  \partial_{x_i}\hat\psi \Big)~dxdt\\\nonumber
&-Q\int\limits_{\Om^\eps_{\rm g}}\C^\eps(0,x)\hat\psi(0,x)~dx-Q\int\limits_S\tg\int\limits_{\Omega^\eps_{\rm g}}\C^\eps\dfrac{\p\hat\psi}{\p t}~dxdt+Q\int\limits_S\tg\int\limits_{\Omega^\eps_{\rm g}}D^\eps\Na\C^\eps\cdot\Na\hat\psi~dxdt\\\nonumber
&+\dfrac{Q}{\eps}\int\limits_S\tg\int\limits_{\Gamma^\eps} W^\eps (\hat\psi-\hat\phi)~d\sigma^\eps(x)dt=0,
\end{align}
Note that the terms  
$$\eps\int\limits_S\tg\int\limits_{\Omega^\eps } \sum_{i=1}^d\Na \Pi_i^\eps(x) \cdot\Na\Big(T^\eps  \p_{x_i}\hat\phi \Big)  dxdt
\quad\mbox{and}\quad 
 \eps\int\limits_S\tg\int\limits_{\Omega_{\rm g} ^\eps } \sum_{i=1}^d\Na \Sigma_i^\eps(x) \cdot\Na\Big(\C^\eps  \partial_{x_i}\hat\psi\Big)  dxdt
$$
converge two-scale with drift to the limits
\begin{align*}
\int\limits_S\tg\int\limits_{\mathbb{R}^d}\tg\int\limits_Y\sum_{i=1}^d\Na \Pi_i(y) \cdot\Na_yT^1(t,x,y)\partial_{x_i}\hat\phi(t,x,y)  dydxdt
\end{align*}
and
\begin{align*}
\int\limits_S\tg\int\limits_{ \mathbb{R}^d}\tg\int\limits_{Y_{\rm g}}\sum_{i=1}^d\Na\Sigma_i(y) \cdot\Na_y\C^1(t,x,y)  \partial_{x_i}\hat\psi(t,x,y)  dydxdt.
\end{align*}
Finally, to obtain the structure of the cell problems as well as the weak formulation of the limit equation, take as test function in  \eqref{F1} the expressions 
$$\hat\phi(t,x)=\phi_0\Big(t,x-\dfrac{ b_T}{\eps}t\Big) + \eps\phi_1\Big(t,x-\dfrac{ b_T}{\eps}t,\dfrac{x}{\eps}\Big),$$ and respectively, 
$$\hat\psi(t,x)=\psi_0\Big(t,x-\dfrac{ b_\C}{\eps}t\Big) + \eps\psi_1\Big(t,x-\dfrac{ b_\C}{\eps}t,\dfrac{x}{\eps}\Big),$$  
where $\big(\phi_0(t,x),\phi_1(t,x,y)\big)\in C_0^\infty(S\times \mathbb{R}^d)\times C_0^\infty(S\times \mathbb{R}^d;H_{\#}^1(Y))$\\
and $\big(\psi_0(t,x),\psi_1(t,x,y)\big)\in C_0^\infty(S\times \mathbb{R}^d)\times C_0^\infty(S\times \mathbb{R}^d;H_{\#}^1(Y_{\rm g}))$ 
satisfying 
\begin{align}\label{cond1}
\phi_i(t_f,x)=0=\psi_i(t_f,x).
\end{align}
In \cite{Harsha}, it was necessary to assume the same variable for the zeroth order terms of the oscillating test functions to deduce the desired coupling of the single physics problem at the macroscopic level. However, in the present multi-physics scenario, we assume the restriction of the zeroth order terms of the test functions on $\Gamma^\eps$ to be equal, i.e.,
\begin{align}\label{cond2}
 \phi_0(t,x)\Big|_{\Gamma^\eps}=\psi_0(t,x)\Big|_{\Gamma^\eps}~\mbox{for a.e. $t\in S.$}
\end{align}
It is now important to take as first step $\phi_0=\psi_0=0$ and then pass to the two-scale limit with drift in \eqref{F1}. We obtain the following weak forms
\begin{align}\label{Teq2}
&\int\limits_{S}\tg\int\limits_{\Real^d}\tg\int\limits_{Y}c(y)T^0 b_T\cdot\Na_x\phi_1  dydxdt + \int\limits_{S}\tg\int\limits_{\Real^d}\tg\int\limits_{Y_{\rm g}}c_{\rm g}  b(y)\cdot\big(\Na_xT^0 + \Na_yT^1\big)\phi_1~dydxdt \\\nonumber
&+\int\limits_{S}\tg\int\limits_{\Real^d}\tg\int\limits_{Y}\La(y)\big(\Na_xT^0 + \Na_yT^1\big)\cdot \Na_y\phi_1  dydxdt \\\nonumber
&- QA\int\limits_{S}\tg\int\limits_{\Real^d}\tg\int\limits_{\Gamma}
\big(\C^0f'(T^0)T^1+ f(T^0)\C^1\big)\phi_1~d\sigma(y)dxdt = 0.
\end{align}
Similarly, we obtain
\begin{align}\label{Ceq2}
&\int\limits_{S}\tg\int\limits_{\Real^d}\tg\int\limits_{Y_{\rm g} }\C^0b_\C\cdot\Na_x\psi_1  dydxdt + \int\limits_{S}\tg\int\limits_{\Real^d}\tg\int\limits_{Y_{\rm g} }
 b(y)\cdot\big(\Na_x\C^0 + \Na_y\C^1\big)\psi_1  dydxdt \\\nonumber
&+\int\limits_{S}\tg\int\limits_{\Real^d}\tg\int\limits_{Y_{\rm g} }D(y)\big(\Na_x\C^0 + \Na_y\C^1\big)\cdot \Na_y\psi_1  dydxdt \\\nonumber
&+ A\int\limits_{S}\tg\int\limits_{\Real^d}\tg\int\limits_{\Gamma}
\big(\C^0f'(T^0)T^1 + f(T^0)\C^1\big)\psi_1  d\sigma(y)dxdt = 0.
\end{align}
\eqref{Teq2} and \eqref{Ceq2} are simply the variational formulation of 
\begin{align}\label{corrector}
\begin{dcases}
-c_{\rm g} b_T\cdot \Na_xT^0 + c_{\rm g} b(y)\cdot\big(\Na_xT^0 + \Na_yT^1\big)\\
-{\rm div}_y\big(\La_{\rm g}(\Na_xT^0 + \Na_yT^1)\big) = 0,\quad&\mbox{in $Y_{\rm g}$}\\
-c_{\rm s} b_T\cdot\Na_xT^0-{\rm div}_y\big(\La_{\rm s}(\Na_xT^0 + \Na_yT^1)\big) = 0,\quad&\mbox{in $Y_{\rm s}$}\\
\big[T^1\big]_{\Gamma} = 0,\quad&\mbox{on $\Gamma$}\\
\big[\La_s\big(\Na_xT^0 + \Na_yT^1\big)-\La_{\rm g} \big(\Na_xT^0 + \Na_yT^1\big)\big]\cdot  n \\
= QA\big(\C^0f'(T^0)T^1 + f(T^0)\C^1\big),\quad&\mbox{on $\Gamma$}\\
-b_\C\cdot \Na_x\C^0 + b(y)\cdot\big(\Na_x\C^0 + \Na_y\C^1\big)\\
-{\rm div}_y\big(D(y)(\Na_x\C^0 + \Na_y\C^1)\big) = 0,\quad&\mbox{in $Y_g$}\\
D(y)\big(\Na_x\C^0 + \Na_y\C^1\big)\cdot n = -A\big(\C^0f'(T^0)T^1 + f(T^0)\C^1\big),\quad&\mbox{on $\Gamma$}\\
\mbox{$(T^1(y),\C^1(y))$ is $Y$-periodic.}  
\end{dcases}
\end{align}
\eqref{corrector} implies that
\begin{eqnarray*}
T^1(t,x,y) = \sum^{d}_{i=1}\chi_i(y)\dfrac{\p T^0}{\p x_i}(t,x),
\end{eqnarray*}
and 
\begin{eqnarray*}
\C^1(t,x,y) = \sum^{d}_{i=1}\omega_i(y)\dfrac{\p \C^0}{\p x_i}(t,x),
\end{eqnarray*}
where $\big(\chi_i,\omega_i\big),~{i=1,\ldots,d}$ solves the cell problem \eqref{cellproblem}.

\begin{remark}
The cell problems \eqref{cellproblem} \rm{(cf.~\eqref{corrector})} indicate that the convective transport is dominant at the microscopic (cell) level. This gives rise to the drifts exhibited by the macroscopic system in Eulerian coordinates. On the other hand, the assumption that the combustion is fast as described in $\mathcal{P}^\eps$ is also reflected on the structure of \eqref{cellproblem}.
\end{remark}

As second and last step, we take $\phi_1=\psi_1=0$ and pass again to the two-scale limit with drift in \eqref{F1}. This yields to the following limit equations:
\begin{align}\label{F2}
&\int\limits_S\tg\int\limits_{\Real^d}c^{\rm eff}\dfrac{\p T^0}{\p t}\phi_0 + \sum_{i,j=1}^{d}\int\limits_S\tg\int\limits_{\Real^d}\tg\int\limits_{Y}\La_{ij}(y)\dfrac{\p T^0}{\p x_j}\dfrac{\p \phi_0}{\p x_i} dydxdt \\\nonumber 
&+\int\limits_S\tg\int\limits_{\Real^d}\tg\int\limits_{Y}\ \sum_{i.j=1}^{d}\sum_{l=1}^{d}\La_{il}(y)\dfrac{\p\chi_{j}(y)}{\p y_l}\dfrac{\p T^0}{\p x_j}\dfrac{\p\phi_0}{\p x_i} dydxdt \\\nonumber
&+ \int\limits_S\tg\int\limits_{\Real^d}\tg\int\limits_{Y}\sum_{i.j=1}^{d}\sum_{l=1}^{d}\dfrac{\p\Pi_i(y)}{\p y_l}\dfrac{\p \chi_j(y)}{\p y_l}\dfrac{\p T^0}{\p x_j}\dfrac{\p\phi_0}{\p x_i}  dydxdt
\end{align}
\begin{align}\label{G2}
&+Q\Big\{\int\limits_S\tg\int\limits_{\Real^d}|Y_{\rm g}|\dfrac{\p \C^0}{\p t}\psi_0 + \sum_{i,j=1}^{d}\int\limits_S\tg\int\limits_{\Real^d}\tg\int\limits_{Y_{\rm g}}D_{ij}(y)\dfrac{\p \C^0}{\p x_j}\dfrac{\p \psi_0}{\p x_i} dydxdt \\\nonumber 
&+\int\limits_S\tg\int\limits_{\Real^d}\tg\int\limits_{Y_{\rm g}}\ \sum_{i.j=1}^{d}\sum_{l=1}^{d}D_{il}(y)\dfrac{\p\omega_{j}(y)}{\p y_l}\dfrac{\p \C^0}{\p x_j}\dfrac{\p\psi_0}{\p x_i} dydxdt \\\nonumber
&+ \int\limits_S\tg\int\limits_{\Real^d}\tg\int\limits_{Y_{\rm g}}\sum_{i.j=1}^{d}\sum_{l=1}^{d}\dfrac{\p\Sigma_i(y)}{\p y_l}\dfrac{\p \omega_j(y)}{\p y_l}\dfrac{\p \C^0}{\p x_j}\dfrac{\p\psi_0}{\p x_i}  dydxdt\Big\} = 0.
\end{align}
\eqref{F2} and \eqref{G2} are the variational formulation of the homogenized problem \eqref{homo} and the effective dispersion tensors are given by
\begin{align}\label{LeffEq0}
\La^{\rm eff}_{ij} = \int\limits_{Y}\La(y)e_i\cdot e_j  dy + \int\limits_{Y}\La(y)\Na_y\chi_j\cdot e_i  dy + \int\limits_{Y}\Na_y\Pi_i\cdot\Na_y\chi_j dy
\end{align}
and
\begin{align}\label{DeffEq0}
\D^{\rm eff}_{ij} = \int\limits_{Y_{\rm g}}D(y)e_i\cdot e_j  dy + \int\limits_{Y_{\rm g}}D(y)\Na_y\omega_j\cdot e_i  dy + \int\limits_{Y_{\rm g}}\Na_y\Sigma_i\cdot\Na_y\omega_j dy,
\end{align}
The coefficient $c^{\rm eff}$ arising in \eqref{F2} is defined by \eqref{effC}. In \eqref{F2} and \eqref{G2}, we identify the solutions to the auxiliary problems given in \eqref{aux1}, which simplify the singular terms in the variational formulations \eqref{Teq1} and \eqref{Ceq1}. We shall employ \eqref{aux1} presently in transforming the dispersion matrix \eqref{LeffEq0}, hence proving that it is equivalent to the formula \eqref{Latensor}. To achieve this, we test \eqref{aux1} for $\Pi_i$ by the cell solution $\chi_j$. Adding the resulting expressions lead to 
\begin{align}\label{LeffEq1}
\int\limits_{Y}\Na_y\Pi_i\cdot\Na_y\chi_j(y) dy = \int\limits_{Y}c(y)(b_{T,i}-b_i(y))~\chi_j(y) dy.
\end{align}
Substituting \eqref{LeffEq1} in \eqref{LeffEq0} gives the non-symmetrized form of the dispersion tensor, which can be obtained by means of the two-scale expansion with drift \cite{Allaire10}. In a next step, we replace the test functions in the variational formulation of \eqref{cellproblem} for $\big(\chi_i,\omega_i\big)$ by $\big(\chi_j,\omega_j\big)$. This results in the following 
\begin{align}\label{LeffEq2}
&\int\limits_{Y} c(y)(b_{T,i}-b_i(y))~\chi_j(y) dy = \int\limits_{Y}\lambda(y)\Na_y\chi_i\cdot\Na_y\chi_j dy\\\nonumber
&+ \int\limits_{Y}\lambda(y)\Na_y\chi_j\cdot e_i dy
+ QA\int\limits_{\Gamma} \Big(f'(T^0)\C^0\chi_i + f(T^0)\omega_i\Big)~\chi_j(y) d\sigma(y).
\end{align}
After substituting \eqref{LeffEq2} in \eqref{LeffEq0}, we see that the formulas \eqref{Latensor} and \eqref{LeffEq0} for the dispersion tensor $\La^{\rm eff}$ are equivalent. A similar argument leads to the dispersion tensor $\D^{\rm eff}$. Thus, we have obtained the variational formulation of the homogenized problem \eqref{homo}, which, according to Lemma \ref{unique}, admits a unique solution. It should be noted that as a consequence of the uniqueness of the solutions, the entire sequence converges.

\paragraph{$\rm IV.$ Uniqueness of solutions to the homogenized equations}
\begin{lemma}\label{unique}
Under the assumptions of Lemma \ref{comp}, there exists a unique solution of the couple, $(T^0,\C^0)$ such that
$$
(T^0,\C^0) \in C(\bar{S};L^2(\Real^d))\times C(\bar{S};L^2(\Real^d))
$$ 
and
$$
(\Na T^0,\Na\C^0) \in L^2(S\times\Real^d)\times L^2(S\times\Real^d)
$$ 
\end{lemma}
\begin{proof}
By construction, the symmetric part of the dispersion tensor satisfying \eqref{varfCell}
is given by 
\begin{align}\label{addtensor}
\mathfrak{L}_{ij}^{\rm sym}(T^0,\C^0) &= \int\limits_{Y_{\rm g}}\La_{\rm g} \big(e_i + \Na_y\chi_{\textrm{g},i}\big)\!\cdot\!\big(e_j + \Na_y\chi_{\textrm{g},j}\big)dy\\\nonumber
&+ \int\limits_{Y_{\rm s}}\La_{\rm s} \big(e_i + \Na_y\chi_{\textrm{s},i}\big)\!\cdot\!\big(e_j + \Na_y\chi_{\textrm{s},j}\big)dy\\\nonumber
&+Q\int\limits_{Y_{\rm g}}D(y)\big(e_i + \Na_y\omega_{i}\big)\!\cdot\!\big(e_j + \Na_y\omega_{j}\big)dy\\\nonumber
&+QA\Big(f(T^0)-f'(T^0)\C^0\Big)\int\limits_{\Gamma}\dfrac{\big(\omega_j\chi_i +\omega_i\chi_j\big)}{2}  d\sigma(y) \\\nonumber
&+f'(T^0)\C^0\int\limits_{\Gamma}\chi_{i}\chi_{j}  d\sigma(y)-f(T^0)\int\limits_{\Gamma}\omega_i\omega_j  d\sigma(y).
\end{align}
Since $f(T^0)\leq T^0$ and $Q>0$, we have 
\begin{align*}
\D^{\rm eff}(T^0,\C^0)\leq \int\limits_{Y_{\rm g}}D(y)dy,~~\La^{\rm eff}(T^0,\C^0) \leq C,
\end{align*} 
for some constant $C\in (0,\infty)$. Hence, $\D^{\rm eff}(T^0,\C^0)$ and $\La^{\rm eff}(T^0,\C^0)$ are uniformly bounded. Given that the diffusion tensors \eqref{Latensor} and \eqref{Dtensor} are symmetric and $f(T^0)\geq 0$, we also have that 
\begin{eqnarray}\label{Llbound}
\La^{\rm eff}(T^0,\C^0) \geq \Bigg[\int\limits_{Y_{\rm g}}\La_{\rm g} dy + \int\limits_{Y_{\rm s}}\La_{\rm s}  dy \Bigg]>\La_0,
\end{eqnarray}
for some $\La_0\in(0,\infty)$. \eqref{addtensor} also implies that 
\begin{eqnarray*}
\mathfrak{L}(T^0,\C^0) \geq  \int\limits_{Y_{\rm g}}\La_{\rm g} dy + \int\limits_{Y_{\rm s}}\La_{\rm s}  dy,
\end{eqnarray*} 
for $Q>0$ and $f(T^0)\geq 0$. Using the fact that $\mathfrak{L}=\La^{\rm eff} + Q\D^{\rm eff}$ and the estimate \eqref{Llbound} with $$\La_0= \max{\Bigg(\int\limits_{Y_{\rm g}}\La_{\rm g} dy,\int\limits_{Y_{\rm s}}\La_{\rm s}  dy\Bigg),}$$ then we observe $\D^{\rm eff}(T^0,\C^0)$ is bounded from below by
 \begin{eqnarray}\label{Dlbound}
\D^{\rm eff}(T^0,\C^0) \geq \dfrac{1}{Q}\Bigg[\int\limits_{Y_{\rm g}}\La_{\rm g} dy + \int\limits_{Y_{\rm s}}\La_{\rm s}  dy-\La_0\Bigg].
\end{eqnarray}
Thus, $\La^{\rm eff}(T^0,\C^0)$ and $\D^{\rm eff}(T^0,\C^0)$ are uniformly coercive and hence the desired uniqueness follows by standard arguments for parabolic equations.
\end{proof}

\section*{Acknowledgments}
A.M. thanks W. J\"ager (Heidelberg) for drawing his attention to the upscaling of filtration combustion and to G. Allaire (Paris) for a fruitful discussion on the concept of two scale convergence with drift and its potential applications to handling averaging scenarios involving reactive flow in porous media. We acknowledge financial supports from NWO MPE "Theoretical estimates of heat losses in geothermal wells" (grant nr. 657.014.004) and 
Science Foundation Ireland (SFI) under Grant Number 14/SP/2750.

\appendix
\section{Proof of the equicontinuity in time for the sequences of functions $(T^\eps, \C^\eps)$}\label{equiLemmaProof}
\begin{proof}
Let us denote $$\check{e}_{jk}(t,x)=e_{jk}\Big(x-\frac{b_{i}}{\eps}t\Big),~\mbox{$i=T,\C$}$$ and use $(\check{e}_{jk}, Q\check{e}'_{jk})$ as a test function in the variational formulation of $\mathcal{P}^\eps$. First, we start with the sequences of functions defined in the moving coordinates and compute the difference
\begin{align}\label{equiEq1}
& \Big(\hat{T}^\eps(t+\delta t, x), e_{jk}(x)\Big)_{L^2(\hat{\Om}^\eps(t + \delta t))} - \Big(\hat{T}^\eps(t, x), e_{jk}(x)\Big)_{L^2(\hat{\Om}^\eps(t))}\\\nonumber
&+ Q\Big(\hat{\C}^\eps(t+\delta t, x), e_{jk}(x)\Big)_{L^2(\hat{\Om}^\eps_{\rm g}(t + \delta t))} - Q\Big(\hat{\C}^\eps(t, x), e_{jk}(x)\Big)_{L^2(\hat{\Om}^\eps_{\rm g}(t))}\\\nonumber
&=\int\limits_t^{t +\delta t} \dfrac{d}{ds}\Big\{\int\limits_{\hat{\Om}^\eps(s)}\hat{T}^\eps(s,x)e_{jk}(x)~dx +Q \int\limits_{\hat{\Om}^\eps_{\rm g}(s)}\hat{\C}^\eps(s,x)e_{jk}(x)~dx\Big\}ds.
\end{align}
In the next step, we change to the fixed coordinates by setting $x \mapsto x -b_it/\eps$ on the right hand side of \eqref{equiEq1}. Then, we recall the variational formulation in fixed coordinates
\begin{align}\label{equiEq2}
&=\int\limits_t^{t +\delta t}\!\!\int\limits_{\Om^\eps}\Big\{\dfrac{\p T^\eps}{\p s}(s,x)\check{e}_{jk}(x) -\dfrac{b_T}{\eps}\cdot\Na\check{e}_{jk}(x)T^\eps(s,x)\Big\}dxds\\\nonumber
&+Q \int\limits_t^{t +\delta t}\!\!\int\limits_{\Om^\eps_{\rm g}}\Big\{\dfrac{\p \C^\eps}{\p s}(s,x)\check{e}'_{jk}(x) -\dfrac{b_\C}{\eps}\cdot\Na\check{e}'_{jk}(x)\C^\eps(s,x)\Big\}dxds\\\nonumber
&= \int\limits_t^{t +\delta t}\!\!\int\limits_{\Om^\eps}\Big(\dfrac{b^\eps-b_{T}}{\eps}\Big)\cdot\Na \check{e}_{jk}(x)T^\eps(s,x)~dxds -\int\limits_t^{t +\delta t}\!\!\int\limits_{\Om^\eps}\La^\eps\Na T^\eps\cdot\Na \check{e}_{jk}(x)~dxds\\\label{equiEq3}
&+ Q\int\limits_t^{t +\delta t}\!\!\int\limits_{\Om^\eps_{\rm g}}\Big(\dfrac{b^\eps-b_{\C}}{\eps}\Big)\cdot\Na \check{e}'_{jk}(x)\C^\eps(s,x)~dxds -Q\int\limits_t^{t +\delta t}\!\!\int\limits_{\Om^\eps_{\rm g}}D\Na \C^\eps\cdot\Na \check{e}'_{jk}(x)~dxds\\\nonumber
&+\dfrac{QA}{\eps} \int\limits_t^{t +\delta t}\!\!\int\limits_{\Gamma^\eps}\C^\eps f(T^\eps)(\check{e}_{jk}-\check{e}'_{jk})~d\sigma ds\nonumber
\end{align}
To deal with the singularity of the convective terms, we apply the auxiliary equations \eqref{aux2} on the convective terms on the right hand side of \eqref{equiEq2}
\begin{align}
\label{CjkEq1}
&= \int\limits_t^{t +\delta t}\!\!\int\limits_{\Om^\eps}\Big\{\eps\Delta \Pi^\eps_i(x)\p_{x_i}\check{e}_{jk}T^\eps(s,x)-\La^\eps\Na T^\eps(s,x)\cdot\Na\check{e}_{jk}(x)\Big\}~dxds\\
\label{CjkEq2}
&+\int\limits_t^{t +\delta t}\!\!\int\limits_{\Om^\eps_{\rm g}}\Big\{\eps\Delta \Sigma^\eps_i(x)\p_{x_i}\check{e}'_{jk}\C^\eps(s,x)-D^\eps\Na \C^\eps(s,x)\cdot\Na\check{e}'_{jk}(x)\Big\}~dxds\\
\label{CjkEq3}
&+\dfrac{QA}{\eps} \int\limits_t^{t +\delta t}\!\!\int\limits_{\Gamma^\eps}\C^\eps f(T^\eps)(\check{e}_{jk}-\check{e}'_{jk})~d\sigma ds.
\end{align}
In order to handle the nonlinear integral \eqref{CjkEq3} with the coefficient $\eps^{-1}$, we state the following technical lemma
\begin{lemma}\label{auxBound}
Let $\phi(t,x,y)\in L^2(S\times\Real^d\times\Gamma)$ be such that 
\begin{eqnarray}\label{solvable}
\int\limits_{\Gamma}\phi(t,x,y) d\sigma = 0,
\end{eqnarray}
\mbox{for a.e. $(t,x)\in S\times \Real^d.$} There exist a periodic vector field $\Upsilon(t,x,y)\in L^2(S\times\Real^d\times \Gamma)^d$ such that ${\rm div}^s_y\Upsilon = \phi$~\mbox{on $\Gamma,$} where ${\rm div^s}_y = {\rm div}_yM(y)$ is the tangential divergence with $M(y)={\rm Id}-n(y)\otimes n(y)$, the projection matrix on the tangent hyperplane to the surface $\Gamma.$
\end{lemma}
\begin{proof} Let $\Upsilon=\Na^s_y\vartheta$ where $\vartheta$ is the unique solution in $H^1_{\#}(\Gamma)/\Real$ of $\Delta^s_y\vartheta = \phi$~\mbox{on $\Gamma$}, which satisfies the solvability condition due to \eqref{solvable}.
\end{proof}
To apply Lemma \ref{auxBound}, we assume restrictions of the basis functions $\check{e}_{jk}$ on $Y$ by a change of variable, $x=\eps y$, taking $\phi(t,y)=\check{e}_{jk}(t,y)-\check{e}'_{jk}(t,y)$. Then, for $\Om^\eps\subset\Real^d$, this translates to
\begin{align}\label{auxBoundEq2} 
\begin{dcases}
\eps~{\rm div}^s_x\Upsilon^\eps(x) = \phi(t,x)=\check{e}_{jk}(t,x)-\check{e}'_{jk}(t,x)~\mbox{on $\Gamma^\eps,$}\\
\mbox{$\Upsilon^\eps(x)$ is $\eps$-periodic.}
\end{dcases}
\end{align}
Integrating \eqref{CjkEq1} and \eqref{CjkEq2} by parts, applying \eqref{auxBoundEq2} on \eqref{CjkEq3} and introducing the nonlinear term $\W^\eps = \eps^{-1}\C^\eps f(T^\eps)$, we have
\begin{align}
\label{CjkEq4}
&= -\int\limits_t^{t +\delta t}\!\!\int\limits_{\Om^\eps}\Big\{\Na_y\Pi_i(y)\cdot\Na \big(\p_{x_i}\check{e}_{jk}T^\eps(s,x)\big)+\La^\eps\Na T^\eps(s,x)\cdot\Na\check{e}_{jk}(x)\Big\}~dxds\\
\label{CjkEq5}
&-\int\limits_t^{t +\delta t}\!\!\int\limits_{\Om^\eps_{\rm g}}\Big\{\Na_y \Sigma^\eps_i(y)\cdot\Na\big(\p_{x_i}\check{e}'_{jk}\C^\eps(s,x)\big)+D^\eps\Na \C^\eps(s,x)\cdot\Na\check{e}'_{jk}(x)\Big\}~dxds\\
\label{CjkEq6}
&+\eps QA \int\limits_t^{t +\delta t}\!\!\int\limits_{\Gamma^\eps}\W^\eps{\rm div}^s_x \Upsilon^\eps(x)~d\sigma ds\\
\label{CjkEq7}
&\leq C_{jk}\sqrt{\delta t}.
\end{align}
The right hand side bound \eqref{CjkEq7} follows by virtue of the a priori estimates of Lemma \ref{apriori}. Thus, we have
\begin{align*}
&\Big|\int\limits_t^{t +\delta t} \dfrac{d}{ds}\int\limits_{\hat{\Om}^\eps(s)}\hat{T}^\eps(s,x)e_{jk}(x)~dxds\Big| +Q \Big|\int\limits_t^{t +\delta t} \dfrac{d}{ds}\int\limits_{\hat{\Om}^\eps_{\rm g}(s)}\hat{\C}^\eps(s,x)e'_{jk}(x)~dxds\Big|\\
& \leq C_{jk}\sqrt{\delta t}.
\end{align*}
\end{proof}
\section{Strong compactness in the moving coordinates framework}
To characterize the sequence of functions $(T^\eps,\C^\eps)$ defined on the periodic domain $\Om^\eps$ in $\Real^d$, we introduce extension operators, as discussed in \cite{Harsha}. Let $E^\eps: H^1(\Om^\eps)\rightarrow H^1(\Real^d)$ be such that there exists a constant $C,$ independent of $\eps$, such that for all functions $\phi^\eps\in H^1(\Om^\eps)$, with $E^\eps \phi^\eps\Big|_{\Om^\eps}=\phi^\eps$
\begin{align}\label{extEq1}
\norm{E^\eps\phi^\eps}_{L^2(\Real^d)} \leq C\norm{\phi^\eps}_{L^2(\Om^\eps)},~~\norm{\Na E^\eps\phi^\eps}_{L^2(\Real^d)} \leq C\norm{\Na\phi^\eps}_{L^2(\Om^\eps)}.
\end{align}
In order to prove compactness in the moving coordinates framework, we will make use of the following sequences of functions, which are decomposed in terms of the orthonormal basis $\{e_{jk}\}\in \Real^d$
\begin{align}
\label{decTEq1}
&\hat{T}^\eps(t,x) = \sum_{j\in\mathbb{N}}\sum_{k\in\mathbb{Z}^d}\mu^{\eps}_{T,jk}(t)e_{jk}(x)~\mbox{with}~\mu^{\eps}_{T,jk}(t)=\int\limits_{\hat{\Om^\eps(t)}}\hat{T}^\eps(t,x)e_{jk}~dx\\
\label{decTEq2}
&\widehat{E^\eps T^\eps}(t,x) = \sum_{j\in\mathbb{N}}\sum_{k\in\mathbb{Z}^d}\nu^{\eps}_{T,jk}(t)e_{jk}(x)~\mbox{with}~\nu^{\eps}_{T,jk}(t)=\int\limits_{\Real^d}\widehat{E^\eps T^\eps}(t,x)e_{jk}~dx.
\end{align}
Similar decompositions also hold for $\hat{\C}^\eps(t,x)$ and $\widehat{E^\eps\C^\eps}(t,x)$ with the corresponding time dependent Fourier coefficients $\mu^{\eps}_{\C,jk}(t)$ and $\nu^{\eps}_{\C,jk}(t)$ defined in $\hat{\Om}^\eps_{\rm g}$ and $\Real^d$ respectively. We start off by showing compactness for the Fourier coefficients as stated in Lemma \ref{limitLemma}.
\begin{lemma}\label{limitLemma}
There exists subsequences, still denoted by $\eps$, such that 
\begin{align*}
\mu^\eps_{T,jk} \rightarrow \mu_{T,jk}~&\mbox{in $L^2(S),~~$ for all $j\in\mathbb{N}, k\in \mathbb{Z}^d$},\\
\mu^\eps_{\C,jk} \rightarrow \mu_{\C,jk}~&\mbox{in $L^2(S),~~$ for all $j\in\mathbb{N}, k\in \mathbb{Z}^d$},
\end{align*}
for some $\mu_{T,jk}\in L^2(S)$, respectively $\mu_{\C,jk}\in L^2(S)$, which are the Fourier coefficients to some functions
\begin{align*}
T^0(t,x) = \sum_{j\in \mathbb{N}}\sum_{k\in\mathbb{Z}^d}\mu_{T,jk}(t)e_{jk}(x),~\C^0(t,x) = \sum_{j\in \mathbb{N}}\sum_{k\in\mathbb{Z}^d}\mu_{\C,jk}(t)e_{jk}(x)
\end{align*}
belonging to $L^2(S\times \Real^d)$.
\end{lemma}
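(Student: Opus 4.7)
The plan is to combine the a priori bound from Lemma~\ref{apriori} with the equicontinuity in time of Lemma~\ref{equiLemma} to extract subsequences of the scalar coefficients via an Arzel\`a--Ascoli plus diagonal argument, and then use the uniform tail control of Lemma~\ref{restEq1} to upgrade pointwise--in--$(j,k)$ convergence into the existence of a genuine $L^2(S\times\Real^d)$ limit.

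First I would fix $(j,k)$ and observe that by Parseval and Cauchy--Schwarz,
\[
\bigl|\mu^\eps_{T,jk}(t)\bigr|\le \norm{e_{jk}}_{L^2(\Real^d)}\norm{\hat{T}^\eps(t,\cdot)}_{L^2(\hat\Om^\eps(t))}=\norm{T^\eps(t,\cdot)}_{L^2(\Om^\eps)},
\]
so by Lemma~\ref{apriori} the family $\{\mu^\eps_{T,jk}\}_{\eps}$ is uniformly bounded in $L^\infty(S)$. Combined with the H\"older-$\tfrac12$ type estimate \eqref{equiLeEq0}, i.e.\ $|\mu^\eps_{T,jk}(t+\delta t)-\mu^\eps_{T,jk}(t)|\le C_{jk}\sqrt{\delta t}$, the sequence is equicontinuous on $\bar S$. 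Arzel\`a--Ascoli then produces a subsequence converging in $C(\bar S)$, hence in $L^2(S)$, to some limit $\mu_{T,jk}\in L^2(S)$; the same argument applies verbatim to $\mu^\eps_{\C,jk}$. A standard Cantor diagonal procedure over the countable index set $\mathbb{N}\times\mathbb{Z}^d$ produces a single subsequence (still denoted $\eps$) along which $\mu^\eps_{T,jk}\to\mu_{T,jk}$ and $\mu^\eps_{\C,jk}\to\mu_{\C,jk}$ in $L^2(S)$ for every $(j,k)$.

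It remains to verify that the formal series $T^0(t,x)=\sum_{j,k}\mu_{T,jk}(t)e_{jk}(x)$ and $\C^0(t,x)=\sum_{j,k}\mu_{\C,jk}(t)e_{jk}(x)$ define elements of $L^2(S\times\Real^d)$. Here I would combine Parseval with the tail estimate of Lemma~\ref{restEq1}: given $\delta>0$, choose $R(\delta)$ as in that lemma, and note that since $e_{jk}$ is supported in a unit cube centered at $k$, for any $\eps$ the partial sum over $\{k:\mathrm{supp}\,e_{jk}\subset\Real^d\setminus Q_{R(\delta)}\}$ contributes at most $\delta^2$ to $\|\hat T^\eps(t,\cdot)\|_{L^2}^2$, uniformly in $\eps$ and $t$. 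Hence for any finite truncation $\Lambda\subset\mathbb{N}\times\mathbb{Z}^d$ we have
\[
\sum_{(j,k)\in\Lambda}\bigl|\mu^\eps_{T,jk}(t)\bigr|^2\le \norm{T^\eps(t,\cdot)}_{L^2(\Om^\eps)}^2,
\]
so passing to the $L^2(S)$ limit in $\eps$ on the left and using Fatou on the right together with Lemma~\ref{apriori} gives $\sum_{(j,k)\in\Lambda}\|\mu_{T,jk}\|_{L^2(S)}^2\le C$. Letting $\Lambda\uparrow\mathbb{N}\times\mathbb{Z}^d$ yields $T^0\in L^2(S\times\Real^d)$, and similarly $\C^0\in L^2(S\times\Real^d)$.

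The main obstacle is the global-in-space control: because the problem is posed on unbounded $\Real^d$, without Lemma~\ref{restEq1} one cannot guarantee that the high-$|k|$ tail of the Fourier expansion remains uniformly small in $\eps$, and hence one cannot promote a coefficient-wise limit to a genuine $L^2$-function. Lemma~\ref{equiLemma} handles convergence of each individual coefficient, Lemma~\ref{restEq1} controls the tail, and Lemma~\ref{apriori} supplies the overall energy bound; these three ingredients together are precisely what the proof requires. The moving-coordinate formulation is essential at this step, since the fixed basis $\{e_{jk}\}$ would otherwise not interact well with a domain whose effective position drifts at speed $\eps^{-1}b^*$.
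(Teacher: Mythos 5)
Your argument is correct and rests on the same two pillars as the paper's proof --- the uniform $L^\infty(S)$ bound on the coefficients coming from Lemma~\ref{apriori} and the time-translation estimate of Lemma~\ref{equiLemma} --- but the compactness mechanism you invoke for the scalar families is different. The paper integrates \eqref{equiLeEq0}--\eqref{equiLeEq1} over $t\in(0,T_f-\delta t)$ to verify the Riesz--Fr\'echet--Kolmogorov criterion, obtaining strong compactness of $\{\mu^\eps_{T,jk}\}$ first in $L^1(S)$ and then upgrading to $L^p(S)$, $1\le p<\infty$, via the $L^\infty$ bound; you instead read \eqref{equiLeEq0} as a uniform H\"older-$\tfrac12$ modulus of continuity and apply Arzel\`a--Ascoli in $C(\bar S)$, which yields the (strictly stronger) uniform convergence and hence $L^2(S)$ convergence directly, at the price of using the pointwise-in-$t$ form of the estimate rather than its integrated version. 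Both routes then require the same Cantor diagonalization over the countable index set, which the paper leaves implicit and you state explicitly, and both conclude with a Bessel-plus-Fatou argument for square-summability of the limit coefficients (the paper compresses this into ``it is straightforward to show''). One small correction to your commentary: the tail estimate of Lemma~\ref{restEq1} is not actually needed to conclude $T^0,\C^0\in L^2(S\times\Real^d)$ --- your own chain of inequalities uses only Bessel's inequality on finite index sets, Fatou, and monotone convergence, with no reference to $Q_{R(\delta)}$. The uniform tail control becomes indispensable only in Theorem~\ref{strongcomp}, where one must prevent mass from escaping to infinity in order to upgrade coefficient-wise convergence to strong $L^2$ convergence of $\hat T^\eps$ itself; for the present lemma it is superfluous.
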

\begin{proof}
The idea of the proof due to \cite{Harsha} is as follows. A direct integration of \eqref{equiLeEq0} and \eqref{equiLeEq1} of Lemma \ref{equiLemma} in time $t\in (0,t_f-\delta t)$ gives the Riesz-Fr\'echet-Kolmogorov (RFK) criterion for strong compactness in $L^1(S)$; see, e.g. \cite{Haim11}. Thus, for any $j\in \mathbb{N}, k\in \mathbb{Z}^d$, there exist a subsequence $\eps_{jk}\rightarrow 0$ and limits $\mu_{T,jk}\in L^1(S)$ and $\mu_{\C,jk}\in L^1(S)$ such that
\begin{eqnarray*}
\mu^{\eps_{jk}}_{T,jk} \rightarrow \mu_{T,jk}\in L^1(S),~\mu^{\eps_{jk}}_{\C,jk} \rightarrow \mu_{\C,jk}\in L^1(S).
\end{eqnarray*}
By virtue of Lemma \ref{apriori}, $\mu^\eps_{T,jk}$ (resp. $\mu^\eps_{\C,jk}$) are bounded in $L^\infty(S)$, and the RFK property holds in $L^p(S), 1\leq p < \infty.$
Thus, it is straightforward to show that $(T^0,\C^0)\in L^2(S\times \Real^d)\times L^2(S\times \Real^d).$
\end{proof}
\subsection{Properties of the Fourier coefficients}
In this step, we estimate the difference between the Fourier coefficients $\mu^\eps_{jk}$ and $\nu^\eps_{jk}$.
\begin{lemma}\label{fourCoeff}
Let $\theta=|Y_{\rm g}|/|Y|$. There exists a constant $C_{jk}$ independent of $\eps$ such that 
\begin{align}\label{estEq1}
\big| \mu^{\eps}_{T,jk}(t)-\nu^\eps_{T,jk}(t)\big| \leq C_{jk}\eps,~\big| \mu^{\eps}_{\C,jk}(t)-\theta\nu^\eps_{\C,jk}(t)\big| \leq C_{jk}\eps.
\end{align}
\end{lemma}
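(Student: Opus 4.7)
The plan is to reduce both differences to integrals over $\Real^d$ carrying a rapidly oscillating mean-zero coefficient, and then to extract one power of $\eps$ by inverting a periodic divergence cell problem.

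Using the identity $\widehat{E^\eps\phi^\eps}|_{\hat{\Om}^\eps}=\hat{\phi}^\eps$ coming from the definition of the extension, I rewrite the moving-domain Fourier coefficients as integrals over all of $\Real^d$ against indicator functions:
\[
\mu^{\eps}_{T,jk}(t)=\int_{\Real^d}\chi_{\hat{\Om}^\eps(t)}(x)\,\widehat{E^\eps T^\eps}(t,x)\,e_{jk}(x)\,dx,\qquad \mu^{\eps}_{\C,jk}(t)=\int_{\Real^d}\chi_{\hat{\Om}^\eps_{\rm g}(t)}(x)\,\widehat{E^\eps \C^\eps}(t,x)\,e_{jk}(x)\,dx.
\]
Since $\Om^\eps$ coincides with $\Real^d$ up to the $d$-dimensional null set $\Gamma^\eps$, we have $\chi_{\hat\Om^\eps(t)}\equiv 1$ a.e., so $\mu^{\eps}_{T,jk}=\nu^{\eps}_{T,jk}$ and the $T$--estimate holds with constant zero. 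The real content of the lemma is therefore the $\C$--estimate.

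Subtracting yields
\[
\mu^\eps_{\C,jk}(t)-\theta\,\nu^\eps_{\C,jk}(t)=\int_{\Real^d}\bigl(\chi_{\hat\Om^\eps_{\rm g}(t)}(x)-\theta\bigr)\,\widehat{E^\eps\C^\eps}(t,x)\,e_{jk}(x)\,dx.
\]
The map $y\mapsto \chi_{Y_{\rm g}}(y)-\theta$ is $Y$-periodic, bounded and of zero mean on $Y$. By solving a pure periodic divergence cell problem on $Y$ (for instance via an auxiliary potential $\Phi\in H^1_{\#}(Y)/\Real$ with $\Delta_y\Phi=\chi_{Y_{\rm g}}-\theta$ and setting $G:=\Na_y\Phi$), one produces a $Y$-periodic vector field $G\in L^\infty(Y)^d$ with $\mathrm{div}_y G(y)=\chi_{Y_{\rm g}}(y)-\theta$ a.e. Rescaling $G^\eps(x):=G(x/\eps)$ and translating by the drift $b_\C t/\eps$, I get the key identity
\[
\chi_{\hat\Om^\eps_{\rm g}(t)}(x)-\theta=\eps\,\mathrm{div}_x\!\Bigl[G^\eps\!\bigl(x-\tfrac{b_\C}{\eps}t\bigr)\Bigr].
\]

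Plugging this identity into the integral and integrating by parts (no boundary terms arise since $e_{jk}$ has compact support) gives
\[
\mu^\eps_{\C,jk}(t)-\theta\,\nu^\eps_{\C,jk}(t)=-\eps\int_{\Real^d}G^\eps\!\bigl(x-\tfrac{b_\C}{\eps}t\bigr)\cdot\Na\bigl(\widehat{E^\eps\C^\eps}(t,x)\,e_{jk}(x)\bigr)\,dx.
\]
Expanding the gradient via Leibniz and estimating by Cauchy--Schwarz, using the $L^\infty$--bound on $G$, the compact support and $W^{1,\infty}$--bounds of $e_{jk}$, the extension inequalities \eqref{extEq1}, and the $L^\infty(S;L^2)$/$L^2(S\times\Real^d)$ a priori bounds of Lemma \ref{apriori}, one obtains $|\mu^\eps_{\C,jk}(t)-\theta\,\nu^\eps_{\C,jk}(t)|\le C_{jk}\eps$, with $C_{jk}$ depending on $e_{jk}$ but not on $\eps$ or $t$.

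The main technical point is producing the corrector $G$: the auxiliary problems \eqref{aux1}--\eqref{aux2} already introduced in the paper carry homogeneous Neumann data on $\Gamma$ and are tailored for drift compensation, so they are not quite what is needed here. One has instead to solve a pure $Y$-periodic divergence problem on the whole cell $Y$, whose solvability rests on the zero-mean property $\int_Y(\chi_{Y_{\rm g}}-\theta)\,dy=0$ and standard elliptic regularity. Once $G$ is in hand, everything else is routine bookkeeping.
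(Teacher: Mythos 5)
Your treatment of the concentration estimate is essentially the paper's own argument: both reduce $\mu^\eps_{\C,jk}-\theta\nu^\eps_{\C,jk}$ to an integral against the mean-zero $Y$-periodic weight $\chi_{Y_{\rm g}}-\theta$, invert a periodic Laplacian on the cell (the paper's auxiliary problem \eqref{auxEq3} plays exactly the role of your potential $\Phi$), integrate by parts to extract one power of $\eps$, and close with the extension bounds \eqref{extEq1} and the a priori estimates of Lemma \ref{apriori}. The cosmetic difference (a divergence equation for a vector field $G$ versus a Poisson equation for a scalar $\Psi$ followed by taking its gradient) is immaterial.

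The gap is in the temperature estimate, which you dismiss as trivially zero. The paper does not treat $\widehat{E^\eps T^\eps}$ as the (a.e.\ trivial) extension of the global temperature from $\Om^\eps$ to $\Real^d$; in its proof the coefficient $\nu^\eps_{T,jk}$ is assembled from the \emph{phase-wise} extensions $E^\eps T^\eps_{\rm g}$ and $E^\eps T^\eps_{\rm s}$, each extended from its own subdomain $\Om^\eps_{\rm g}$, resp.\ $\Om^\eps_{\rm s}$, to all of $\Real^d$, and the difference $\mu^\eps_{T,jk}-\nu^\eps_{T,jk}$ then carries the two mean-zero oscillating weights $\chi_{\rm g}(x/\eps)-\theta$ and $\chi_{\rm s}(x/\eps)-(1-\theta)$; see \eqref{decEq5}--\eqref{decEq6}. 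The first inequality in \eqref{estEq1} is therefore not an identity with constant zero: it requires exactly the same corrector and integration-by-parts mechanism you deploy for $\C$, now applied separately on $Y_{\rm g}$ and $Y_{\rm s}$, which is precisely what the two-piece auxiliary problem \eqref{auxEq3} is for. As written, your argument establishes only the second inequality of the lemma; to complete it you should run your divergence-corrector construction once more for each of the weights $\chi_{\rm g}-\theta$ and $\chi_{\rm s}-(1-\theta)$, paired with $E^\eps T^\eps_{\rm g}$ and $E^\eps T^\eps_{\rm s}$ respectively.
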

\begin{proof}
Using the definitions \eqref{decTEq1}-\eqref{decTEq2} of the Fourier coefficients, we obtain
\begin{align}
\label{decEq3}
&\mu^{\eps}_{T,jk}(t)-\nu^{\eps}_{T,jk}(t) = \int\limits_{\Om^\eps(t)}\hat{T}^\eps(t,x)e_{jk}(x)dx -\theta\int\limits_{\Real^d}\widehat{E^\eps T^\eps}(t,x)e_{jk}(x)dx\\
\label{decEq4}
&-(1-\theta)\int\limits_{\Real^d}\widehat{E^\eps T^\eps}(t,x)e_{jk}(x)dx\\
\label{decEq5}
&=\int\limits_{\Real^d}E^\eps T^\eps_{\rm s}(t,x)\check{e}_{jk}(x)\Big(\chi_{\rm s}(x/\eps)-(1-\theta)\Big)dx\\
\label{decEq6}
& + \int\limits_{\Real^d}E^\eps T^\eps_{\rm g}(t,x)\check{e}_{jk}(x)\Big(\chi_{\rm g}(x/\eps)-\theta\Big)dx,
\end{align}
where $\chi_i(x/\eps)$ are the characteristic functions of $\Om^\eps_i$ or their equivalents $\chi_i(y)$ defined on $Y_i$ with $i=\{g,s\}.$ To simplify \eqref{decEq5}-\eqref{decEq6} further, we introduce the following auxiliary problem:
\begin{align}\label{auxEq3}
\begin{dcases}
-{\rm div}_y\big(\Na_y\Psi(y)\big) = \chi_{\rm s}(y)-(1-\theta),~&\mbox{in $Y_{\rm s}$}\\
-{\rm div}_y\big(\Na_y\Psi(y)\big) = \chi_{\rm g}(y)-\theta,~&\mbox{in $Y_{\rm g}$}\\
\mbox{$\Psi(y)$ is $Y$-periodic}.
\end{dcases}
\end{align}
Using \eqref{auxEq3} in \eqref{decEq3} and integrating by parts the resulting expression results to
\begin{align}
&\big|\mu^{\eps}_{T,jk}(t)-\nu^{\eps}_{T,jk}(t)\big| \leq \eps\int\limits_{\Real^d}\Big|\Na_x\Psi^\eps(x)\cdot\Na\Big(E^\eps T^\eps_{\rm s}(t,x)\check{e}_{jk}(x)\Big)\Big|dx\\
\label{decEq7}
&+ \eps\int\limits_{\Real^d}\Big|\Na_x\Psi^\eps(x)\cdot\Na\Big(E^\eps T^\eps_{\rm g}(t,x)\check{e}_{jk}(x)\Big)\Big|dx \leq C_{jk}\eps.
\end{align}
The desired inequality in \eqref{decEq7} follows by using properties \eqref{extEq1} of $E^\eps$ and Lemma \ref{apriori}. Following a similar approach as described above leads to the second estimate of the Fourier coefficients given in \eqref{estEq1}.
\end{proof}
Now, we state a technical result that describes how the modal series of the bounded sequences in $L^2(S;H^1(\Real^d))$ introduced above can be truncated.
\begin{lemma}\label{truncEq1}
Let $u^\eps(t,x)$ be a bounded sequence in $L^2(S;H^1(\Real^d)).$ Then for any $\delta>0$, there exists a $N(\delta)$ such that for all $\eps$
\begin{align}
\Big\|u^\eps\chi_{Q_{R(\delta)}} - \sum_{|k|\leq R(\delta)}\sum_{|j|\leq N(\delta)} \U^\eps_{jk}(t)e_{jk}\Big\|_{L^2(S\times \Real^d)} \leq \delta,
\end{align}
where $Q_{R(\delta)}$ is defined in \eqref{cube} and 
\begin{align}
\U^\eps_{jk}(t)=\int\limits_{\Real^d}u^\eps(t,x)e_{jk}(x) dx
\end{align}
are the time dependent Fourier coefficients of $u^\eps$ defined as in \eqref{decTEq2}.
\end{lemma}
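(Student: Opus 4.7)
The plan is to exploit the disjoint supports of the basis functions $\{e_{jk}\}$ to reduce the claim to a cellwise estimate, and then to invoke the compactness of the Sobolev embedding $H^1 \hookrightarrow L^2$ on the unit cube in order to obtain a decay of the Fourier tail that is uniform in $\eps$.

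First, without loss of generality I take $R(\delta)\in\mathbb{N}$ and index $k\in\mathbb{Z}^d$ so that $Q_{R(\delta)}$ is exactly the disjoint (up to measure zero) union $\bigcup_{\|k\|_\infty\leq R(\delta)}([0,1]^d+k)$. Each $e_{jk}$ is supported in $[0,1]^d+k$, so the sum over $|k|\leq R(\delta)$ captures precisely the basis elements living inside $Q_{R(\delta)}$, and $\U^\eps_{jk}(t)=\int_{[0,1]^d+k}u^\eps(t,x)e_{jk}(x)\,dx$ depends only on the restriction of $u^\eps$ to that cell. Using orthogonality of the distinct cells, this reduces the estimate to
$$\Big\|u^\eps\chi_{Q_{R(\delta)}}-\!\!\sum_{|k|\leq R(\delta)}\sum_{|j|\leq N}\U^\eps_{jk}(t)e_{jk}\Big\|_{L^2(\Real^d)}^2=\sum_{|k|\leq R(\delta)}\big\|u^\eps(t,\cdot)-P^k_N u^\eps(t,\cdot)\big\|_{L^2([0,1]^d+k)}^2,$$
where $P^k_N$ denotes the orthogonal projection in $L^2([0,1]^d+k)$ onto $\mathrm{span}\{e_{jk}:|j|\leq N\}$.

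Second, on the reference cell $Y=(0,1)^d$, let $P_N:L^2(Y)\to L^2(Y)$ be the orthogonal projection onto $\mathrm{span}\{e_j:|j|\leq N\}$. Since $\{e_j\}$ is an orthonormal basis, $P_N\to I$ strongly in $L^2(Y)$. The Rellich--Kondrachov embedding $\iota:H^1(Y)\hookrightarrow L^2(Y)$ is compact, and the standard fact that strong operator convergence composed with a compact operator yields norm convergence gives $\alpha_N:=\|(I-P_N)\iota\|_{H^1(Y)\to L^2(Y)}\to 0$. Hence $\|v-P_Nv\|_{L^2(Y)}\leq \alpha_N\|v\|_{H^1(Y)}$ for every $v\in H^1(Y)$, and by translation the same $\alpha_N$ works on every cell $[0,1]^d+k$. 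Applying this cellwise, squaring, summing over $|k|\leq R(\delta)$ and integrating in time yields
$$\Big\|u^\eps\chi_{Q_{R(\delta)}}-\!\!\sum_{|k|\leq R(\delta)}\sum_{|j|\leq N}\U^\eps_{jk}e_{jk}\Big\|_{L^2(S\times\Real^d)}^2\leq \alpha_N^2\,\|u^\eps\|_{L^2(S;H^1(Q_{R(\delta)}))}^2\leq \alpha_N^2\,C,$$
with $C$ the uniform-in-$\eps$ bound on $u^\eps$ in $L^2(S;H^1(\Real^d))$. Choosing $N(\delta)$ so that $\alpha_{N(\delta)}^2 C\leq \delta^2$ gives the conclusion uniformly in $\eps$.

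The main obstacle is exactly the uniformity in $\eps$ of the Fourier tail: for an arbitrary orthonormal basis of $L^2(Y)$ one only has pointwise decay $\|v-P_Nv\|_{L^2}\to 0$ for each fixed $v$, which fails to be uniform on bounded subsets of $L^2$. The decisive ingredient is therefore the extra regularity of $u^\eps$, i.e. the uniform $L^2(S;H^1)$-bound; combined with the compactness of $\iota$, it upgrades the strong convergence $P_N\to I$ to operator-norm convergence of $(I-P_N)\iota$, which is exactly what is needed to close the estimate uniformly in $\eps$.
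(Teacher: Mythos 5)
Your proof is correct and follows the same route the paper merely cites (compact embedding $H^1\hookrightarrow L^2$ plus the uniform $H^1$ bound to make the Fourier tail decay uniform in $\eps$), executed cleanly via a cellwise reduction and the standard fact that strong convergence of $(I-P_N)$ composed with a compact embedding yields operator-norm convergence. Since the paper only sketches this by reference to Maru\v{s}i\'c-Paloka--Piatnitski and Allaire--Hutridurga, your write-up simply supplies the details of essentially the same argument.
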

\begin{proof}
The proof is a consequence of Lemma \ref{restEq1}, the embedding $H^1(Q_{R(\delta)})\hookrightarrow L^2(Q_{R(\delta)})$ and the a priori bounds of Lemma \ref{apriori}. For more details, we refer the interested reader to \cite[Lemma~4. p.~398]{Maro} and \cite[Lemma 2.9]{Harsha}.
\end{proof}

\subsection{Proof of Lemma \ref{strongcomp}}
\begin{proof}\label{Proofstrongcomp}
According to Lemma \ref{restEq1}, for any $\delta>0$ and for sufficiently large $R(\delta)>0$, we have
\begin{align}\label{SEq1}
\norm{\hat{T}^\eps -\hat{T}^\eps\chi_{Q_{R(\delta)}}}_{L^2(S\times \hat{\Om}^\eps(t))} \leq \dfrac{\delta}{5},~\norm{\hat{\C}^\eps -\hat{\C}^\eps\chi_{Q_{R(\delta)}}}_{L^2(S\times \hat{\Om}^\eps_{\rm g}(t))} \leq \dfrac{\delta}{5}.
\end{align}
For the extended sequences $\widehat{E^\eps T^\eps}$ (resp. $\widehat{E^\eps \C^\eps}$) in $\Real^d$, Lemma \ref{truncEq1} implies that for any $\delta>0,$ there exists $N(\delta)$ such that for $\eps>0$ small enough,
\begin{align}\label{SEq2}
\Big\|\widehat{E^\eps T^\eps}\chi_{Q_{R(\delta)}} - \sum_{|k|\leq R(\delta)}\sum_{|j|\leq N(\delta)} \nu^\eps_{T,jk}(t)e_{jk}(x)\Big\|_{L^2(S\times \Real^d)} \leq \dfrac{\delta}{5}.
\end{align}
Without loss of generality, we will assume henceforth that similar results apply for the sequence $\hat{\C}^\eps.$ Since by definition, $$\widehat{E^\eps T^\eps}\Big|_{Q_{R(\delta)}}= \hat{T}^\eps \chi_{Q_{R(\delta)}}.$$ 
\eqref{SEq2} reduces to
\begin{align}\label{SEq3}
\Big\|\hat{T}^\eps\chi_{Q_{R(\delta)}} - \sum_{|k|\leq R(\delta)}\sum_{|j|\leq N(\delta)} \nu^\eps_{T,jk}(t)e_{jk}(x)\Big\|_{L^2(S\times \Real^d)} \leq \dfrac{\delta}{5}.
\end{align}
By Lemma \ref{equiLemma}, for any $\delta$ and $\eps$ small enough, we have
\begin{align}\label{SEq4}
\Big\| \sum_{|k|\leq R(\delta)}\sum_{|j|\leq N(\delta)}\nu^\eps_{T,jk}(t)e_{jk}(x)  - \sum_{|k|\leq R(\delta)}\sum_{|j|\leq N(\delta)} \mu^\eps_{T,jk}(t)e_{jk}(x)\Big\|_{L^2(S\times \Real^d)} \leq \dfrac{\delta}{5}.
\end{align}
Since by virtue of Lemma \ref{limitLemma}, $\mu^\eps_{T,jk}(t)$ is relatively compact in $L^2(S).$ Then, for sufficiently small $\eps$
\begin{align}\label{SEq5}
\Big\| \sum_{|k|\leq R(\delta)}\sum_{|j|\leq N(\delta)}\mu^\eps_{T,jk}(t)e_{jk}(x)  - \sum_{|k|\leq R(\delta)}\sum_{|j|\leq N(\delta)} \mu_{T,jk}(t)e_{jk}(x)\Big\|_{L^2(S\times \Real^d)} \leq \dfrac{\delta}{5}
\end{align}
Lemma \ref{limitLemma} implies the existence of a function $T^0(t,x) \in L^2(S\times \Real^d)$, such that for sufficiently large $N(\delta)$, we have
\begin{align}\label{SEq6}
\Big\| \sum_{|k|\leq R(\delta)}\sum_{|j|\leq N(\delta)}\mu_{T,jk}(t)e_{jk}(x)  - T^0(t,x)\Big\|_{L^2(S\times Q_{R(\delta)})} \leq \dfrac{\delta}{5}.
\end{align}
Finally, summing up \eqref{SEq1}-\eqref{SEq6}, we establish \eqref{STEq}, respectively \eqref{SCEq}.
\end{proof}


\begin{thebibliography}{10}
%
\bibitem{Moore90}
H.~Yi, J.~Moore, Self-propagating high-temperature (combustion) synthesis
  ({SHS}) of powder-compacted materials, Journal of Materials Science 25~(2)
  (1990) 1159--1168.

\bibitem{wang08}
S.~Wang, X.~Zhang, Microgravity smoldering combustion of flexible polyurethane
  foam with central ignition, Microgravity Sci. Technol. 20 (2008) 99--105.

\bibitem{Olson98}
S.~Olson, H.~Baum, T.~Kashiwagi, Finger-like smoldering over thin cellulose
  sheets in microgravity, Twenty-Seventh Symposium (International) on
  Combustion (1998) 2525--2533.

\bibitem{Aldushin06}
A.~P. Aldushin, A.~Bayliss, B.~J. Matkowsky, On the transition from smoldering
  to flaming, Combust. Flame 145 (2006) 579--606.

\bibitem{THULLIE95}
J.~Thullie, A.~Burghardt, Simplified procedure for estimating maximum cycling
  time of flow-reversal reactors, Chemical Engineering Science 50~(14) (1995)
  2299 -- 2309.

\bibitem{Andrez99}
A.~Burghardt, M.~Berezowski, E.~W. Jacobsen, Approximate characteristics of a
  moving temperature front in a fixed-bed catalytic reactor, Chem. Engineering
  and processing 38 (1999) 19 -- 34.

\bibitem{Faeth86}
G.~Faeth, G.~S. Samuelsen, Fast reaction nonpremixed combustion, Prog. Energy
  Combust. Sci. 12 (1986) 305--372.

\bibitem{Taylor53}
G.~Taylor, Dispersion of soluble matter in solvent flowing slowly through a
  tube, in: Proceedings of the Royal Society of London A: Mathematical,
  Physical and Engineering Sciences, Vol. 219, The Royal Society, 1953, pp.
  186--203.

\bibitem{Salles93}
J.~Salles, J.-F. Thovert, R.~Delannay, L.~Prevors, J.-L. Auriault, P.~Adler,
  Taylor dispersion in porous media. determination of the dispersion tensor,
  Physics of Fluids A: Fluid Dynamics (1989-1993) 5~(10) (1993) 2348--2376.

\bibitem{Fatehi94}
M.~Fatehi, M.~Kaviany, Adiabatic reverse combustion in a packed bed, Combust.
  Flame 99 (1994) 1--17.

\bibitem{Choquet14}
C.~Choquet, C.~Rosier, Effective models for reactive flow under a dominant
  {P}ecl{\'e}t number and order one {D}amk{\"o}hler number: Numerical
  simulations, Nonlinear Analysis: Real World Applications 15 (2014) 345 --
  360.

\bibitem{Pedras08}
M.~H. Pedras, M.~J. de~Lemos, Thermal dispersion in porous media as a function
  of the solid-fluid conductivity ratio, International Journal of Heat and Mass
  Transfer 51~(21-22) (2008) 5359 -- 5367.

\bibitem{HSU90}
C.~Hsu, P.~Cheng, Thermal dispersion in a porous medium, International Journal
  of Heat and Mass Transfer 33~(8) (1990) 1587 -- 1597.

\bibitem{Sano11}
Y.~Sano, F.~Kuwahara, M.~Mobedi, A.~Nakayama, Effects of thermal dispersion on
  heat transfer in cross-flow tubular heat exchangers, Heat and Mass Transfer
  48~(1) (2011) 183--189.

\bibitem{Moyne02}
C.~Moyne, S.~Didierjean, H.~A. Souto, O.~da~Silveira, Thermal dispersion in
  porous media: one-equation model, International Journal of Heat and Mass
  Transfer 43~(20) (2000) 3853 -- 3867.

\bibitem{Auriault95}
J.~L. Auriault, P.~Adler, {T}aylor dispersion in porous media: Analysis by
  multiple scale expansions, Advances in Water Resources 18~(4) (1995)
  217--226.

\bibitem{Harsha}
G.~Allaire, H.~Hutridurga, Upscaling nonlinear adsorption in periodic porous
  media--homogenization approach, Applicable Analysis 95~(10) (2016)
  2126--2161.

\bibitem{Kagan}
I.~Brailovsky, P.~V. Gordon, L.~Kagan, G.~Sivashinsky, Diffusive-thermal
  instabilities in premixed flames: Stepwise ignition-temperature kinetics,
  Combustion and Flame 162~(5) (2015) 2077--2086.

\bibitem{Fatima12}
T.~Fatima, A.~Muntean, Sulfate attack in sewer pipes : derivation of a concrete
  corrosion model via two-scale convergence, Nonlinear Anal.: Real World
  Applications 15 (2014) 326--344.

\bibitem{Muntean12}
T.~Fatima, A.~Muntean, M.~Ptashnyk, Unfolding-based corrector estimates for a
  reaction-diffusion system predicting concrete corrosion, Applicable Analysis
  91~(6) (2012) 1129--1154.

\bibitem{Maro}
E.~Marusik-Paloka, A.~Piatnitski, Homogenization of nonlinear
  convection-diffusion equation with rapidly oscillating coefficients and
  strong convection, J. London Math. Soc. 72~(2) (2005) 301--409.

\bibitem{Mik}
G.~Allaire, A.~Mikelic, A.~Piatnitski, Homogenization approach to the
  dispersion theory for reactive transport through porous media, SIAM J. Math.
  Anal. 42~(1) (2010) 125--144.

\bibitem{HH}
H.~Hutridurga, Homogenization of complex flows in porous media and
  applications, Ph.D. thesis, \'Ecole Polyt\'echnique, Palaiseau, France
  (2013).

\bibitem{Thomas17}
T.~Holding, H.~Hutridurga, J.~Rauch, Convergence along mean flows, SIAM Journal
  on Mathematical Analysis 49~(1) (2017) 222--271.

\bibitem{Allaire15}
G.~Allaire, H.~Hutridurga, On the homogenization of multicomponent transport,
  Discrete \& Continuous Dynamical Systems-B 20~(8) (2015) 2527--2551.

\bibitem{JFA}
G.~Allaire, I.~Pankratova, A.~Piatnitski, Homogenization and concentration of a
  diffusion equation with large convection in a bounded domain, Journal of
  Functional Analysis 262 (2012) 300--330.

\bibitem{HansBruining}
J.~Bruining, A.~A. Mailybaev, D.~Marchesin, Filtration combustion in wet porous
  medium, SIAM J. Appl. Math. 70~(4) (2009) 1157--1177.

\bibitem{buck85}
J.~D. Buckmaster, The {M}athematics of {C}ombustion, Frontiers in Combustion,
  Society of Industrial and Applied Mathematics, 1985.

\bibitem{Mimura09}
A.~Fasano, M.~Mimura, M.~Primicerio, Modelling a slow smoldering combustion
  process, Math. Methods Appl. Sci. (2009) 1--11.

\bibitem{Ijioma13}
E.~R. Ijioma, A.~Muntean, T.~Ogawa, Pattern formation in reverse smouldering
  combustion: {A} homogenisation approach, Combust. Theor. and Model. 17~(2)
  (2013) 185--223.

\bibitem{Ijioma15b}
E.~R. Ijioma, A.~Muntean, T.~Ogawa, Effect of material anisotropy on the
  fingering instability in reverse smoldering combustion, Int. J. Heat Mass
  Transfer 81~(0) (2015) 924--938.

\bibitem{Chen}
P.~J. Chen, M.~E. Gurtin, On a theory of heat conduction involving two
  temperatures, ZAMP 19~(4) (1968) 614--627.

\bibitem{Cioranescu99}
D.~Cioranescu, P.~Donato, An {I}ntroduction to {H}omogenization, Oxford
  University Press, New York, 1999.

\bibitem{Cioranescu07a}
D.~Cioranescu, P.~Donato, R.~Zaki, Asymptotic behavior of elliptic problems in
  perforated domains with nonlinear boundary conditions, Asymptotic Anal. 53
  (2007) 209--235.

\bibitem{Auriault91}
J.~L. Auriault, Heterogeneous medium. is an equivalent macroscopic description
  possible?, Int. J. Engng. Sci. 29~(7) (1991) 785--795.

\bibitem{Sivashinsky83}
G.~I. Sivashinsky, Instabilities, pattern formation, and turbulence in flames,
  Annual Review of Fluid Mechanics 15~(1) (1983) 179--199.

\bibitem{Hornung91}
U.~Hornung, W.~J\"ager, Diffusion, convection, adsorption and reaction of
  chemicals in porous media, J. Diff. Eqs. 92 (1991) 199--225.

\bibitem{Sara}
S.~Monsurro, Homogenization of a two-component composite with interfacial
  thermal barrier, Adv. Math. Sci. Appl. 13~(1) (2003) 43--63.

\bibitem{Allaire10}
G.~Allaire, R.~Brizzi, A.~Mikelic, A.~Piatnitski, Two-scale expansion with
  drift approach to the {T}aylor dispersion for reactive transport through
  porous media, Chem. Engrg. Sci. 65~(7) (2010) 2292--2300.

\bibitem{Haim11}
H.~Brezis, Functional {A}nalysis, {S}obolev {S}paces and {P}artial
  {D}ifferential {E}quations, Springer, Heidelberg, 2011.

\end{thebibliography}
\end{document}